\numberwithin{equation}{section} \newtheorem{theorem}{Theorem}[section]    \theoremstyle{definition}
 \newtheorem{remark}[theorem]{Remark} 
\def\q{\mathfrak{q}}
\newtheorem{conj}[theorem]{Conjecture}
\begin{document}

\title[Reidemeister torsion, Complex volume, and Zograf infinite product] 
{Reidemeister torsion, Complex volume, and Zograf infinite product for hyperbolic 3-manifolds with cusps}

\author{Jinsung Park} \address{School of Mathematics\\ Korea Institute for Advanced Study\\ 207-43\\ Hoegiro 85\\ Dong\-daemun-gu\\ Seoul 130-722\\ Korea } \email{jinsung@kias.re.kr}

\thanks{2010 Mathematics Subject Classification 	57Q10, 32Q45, 58J28.}

\date{\today}

\begin{abstract} In this paper, we prove an equality which involves Reidemeister torsion, complex volume, and Zograf infinite product for hyperbolic 3-manifolds with cusps.
\end{abstract}

\maketitle


 \section{Introduction} \label{s:Introduction}

In this paper, we prove an equality which involves Reidemeister torsion, complex volume, and Zograf infinite product for hyperbolic 3-manifolds with cusps.
This partially extends the result for closed hyperbolic 3-manifolds in the previous work  \cite{Park17}. In this equality,  new contributions from cusps are given by Dedekind eta functions and theta functions.

To state the main result of this paper, let us introduce some notations. 
Let $M_0$ denote a hyperbolic 3-manifold with cusps. Then we have a complex valued invariant called \emph{complex volume}
\begin{equation}
\mathbb{V}(M_0)=\mathrm{Vol}(M_0)+i2\pi^2 \mathrm{CS}(M_0)
\end{equation}
where the real part $\mathrm{Vol}(M_0)$ denotes the hyperbolic volume of $M_0$ and the imaginary part $\mathrm{CS}(M_0)$
denotes the Chern-Simons invariant defined by the Levi-Civita connection
of the hyperbolic metric of $M_0$. This complex volume plays a very important role in the research of hyperbolic 3-manifolds of finite volume,
and has been studied extensively in \cite{NZ}, \cite{Y}, \cite{NY}, \cite{Zi}. This complex volume is one of terms in the main result of this paper. 

Another main object appearing in our result is the Reidemeister torsion attached to a certain representation of $\pi_1(M_0)$. This representation is defined 
to be the composition of the $k$-th symmetric tensor of the natural action of $\mathrm{SL}(2,\mathbb{C})$ on $\mathbb{C}^2$ and a $\mathrm{SL}(2,\mathbb{C})$-lift of the 
holonomy representation $\rho:\pi_1(M_0)\to \mathrm{PSL}(2,\mathbb{C})$. We denote by $\rho^k$ the resulting representation of $\pi_1(M_0)$.  
The choice of the $\mathrm{SL}(2,\mathbb{C})$-lifting corresponds to a spin structure on $M_0$. But, when $k=2n$ is even, 
the resulting representation $\rho^{2n}$ does not depend on this choice. Since the definition of Reidemeister torsion also involves the choice of basis of
the homology groups $H_*(M_0, \rho^{2n})$, it is necessary to specify a basis in order to define Reidemeister torsion. 
For a hyperbolic 3-manifold with cusps $M_0$, there is a canonical way to get 
a basis $H_*(M_0,\rho^{2n})$ from a simple closed curve $c_i$ in the torus section $T_i$ associated to the $i$-th cusp of $M_0$ for $i=1,\ldots,h$ (see the subsection \ref{ss:spin}).  Here $h$ denotes the number of cusps.
Let us denote by $\mathcal{T}(M_0, \rho^{2n}, \{c_i\})$
the resulting Reidemeister torsion.

The third object appearing in our result is \emph{Zograf infinite product}, which was introduced in \cite{Z}.  This is defined by
\begin{equation}
 F_{n}(M_0)= \prod_{[\gamma]} \prod_{m=n}^\infty
(1 - \q_{\gamma}^{m}) \qquad \text{for} \quad n\geq 3. 
\end{equation} 
Here the first product is taken over the set of conjugacy classes of the primitive loxodromic elements $\gamma\in \Gamma\subset \mathrm{PSL}(2,\mathbb{C})$
where $\Gamma$ is the image of the holonomy representation $\rho$ of $\pi_1(M_0)$, and
$\q_{\gamma}=\exp(-(l_\gamma+i\theta_\gamma))$ with $l_\gamma$ and $\theta_\gamma$ denoting the length and torsion of the prime geodesic in $M_0$ determined by $[\gamma]$.

In our result for hyperbolic 3-manifolds with cusps, there are contributions from cusps so that we need to introduce some notations for these.
Let us choose a pair of simple closed curves $(m_i,l_i)$ on each torus section $T_i$ associated to the $i$-th cusp, which
form a basis of $H_1(T_i, \mathbb{Z})$. The holonomy representation $\rho$ induces a representation of the  subgroup of $\pi_1(M_0)$ generated by
$(m_i,l_i)$, which determines a complex numbers $\tau_i$ with $\mathrm{Im}(\tau_i)>0$ for $i=1,\ldots, h$. Then $\tau_i$ is the modulus of the Euclidean structure on $T_i$ with respect to $(m_i,l_i)$ 
(see the subsection \ref{ss:deformation}). 

Now we can state  the main result of this paper:

\begin{theorem}\label{t:main theorem} Let $M_0$ be a complete hyperbolic $3$-manifold of finite volume with $h$ cusps. For $n\geq 3$, the following equality holds
\begin{equation}\label{e:main thm} 
\left| \mathcal{T}(M_0,\rho^{2(n-1)}, \{m_i\}) \, \prod_{i=1}^h  \eta({\tau}_i)^{2}\right|^{-1} 
=\left| \exp\left( \frac{1}{\pi} (n^2-n+\frac16) \mathbb{V}(M_0) \right)\, F_{n}(M_0)\right| 
\end{equation}
where $\eta(\tau_i)$ denotes the Dedekind eta function of $\tau_i$.
\end{theorem}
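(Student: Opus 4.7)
The plan is to mirror the strategy used in \cite{Park17} for the closed case and then track carefully the new contributions from the cusps. The overall identity is an identity between two regularizations of the same object --- roughly, the twisted analytic torsion of $M_0$ --- one computed topologically (yielding Reidemeister torsion, with cusp corrections accounting for the basis choice) and one computed spectrally (yielding the Zograf product and the complex-volume exponential, again with cusp corrections from the continuous spectrum).

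First, I would truncate each cusp at height $Y$ to obtain a compact hyperbolic manifold with torus boundary $M_Y$, and set up the twisted de Rham / Hodge theory on $M_Y$ with absolute boundary conditions. The representation $\rho^{2(n-1)}$ is unimodular, so Ray--Singer analytic torsion $\mathcal{T}^{an}(M_Y,\rho^{2(n-1)})$ is well defined, and an anomaly-type formula identifies it (up to a boundary term explicitly involving the flat connection restricted to the tori $T_i$) with the Reidemeister torsion with a basis coming from $H_1(T_i)$. The next step is to show that as $Y\to\infty$ the basis coming from the longitude-meridian pair degenerates in a controlled way; after renormalizing one copy per cusp by the modulus $\tau_i$, the survivor is precisely $\mathcal{T}(M_0,\rho^{2(n-1)},\{m_i\})$, and the boundary anomaly gives the factor $\prod_i |\eta(\tau_i)|^{2}$ through the standard Kronecker limit / evaluation of the determinant of the flat Laplacian on the Euclidean torus $(T_i,\tau_i)$.

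On the spectral side, I would use the Selberg trace formula for $\Gamma\backslash\mathbb{H}^3$ with the twist $\rho^{2(n-1)}$. The regularized determinant of $\Delta_{\rho^{2(n-1)}}-\lambda$ factors into: (a) a geometric part equal to the Selberg zeta function $Z(s,\rho^{2(n-1)})$, whose Euler product at $s=n$ is exactly $|F_n(M_0)|^{2}$ up to an elementary factor; (b) an identity--contribution giving the exponential of the volume part, with the coefficient $\frac{1}{\pi}(n^2-n+\frac16)$ arising as in \cite{Park17} from integrating the Plancherel density of the symmetric power against a local heat trace; (c) a Chern--Simons-type phase, absorbed into $\mathrm{Im}\mathbb{V}(M_0)$ but invisible once absolute values are taken; and (d) new cuspidal terms coming from the continuous spectrum and the scattering matrix $\Phi(s,\rho^{2(n-1)})$, whose Maass--Selberg evaluation expresses $\det\Phi$ at $s=n$ in terms of $\prod_i \eta(\tau_i)$. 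Combining (a)--(d) with the outcome of the truncation argument yields the equality \eqref{e:main thm}.

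The hard part, which is what distinguishes this theorem from \cite{Park17}, is the synthesis of the cusp contributions from the two sides. Topologically one must verify that the anomaly produced by the boundary tori in the Cheeger--Müller comparison on $M_Y$ is precisely what is needed to convert between the natural bases $\{m_i,l_i\}$ and $\{m_i\}$, introducing $\eta(\tau_i)^{2}$ and not some other modular form; spectrally one must identify the $Y$-independent finite part of the continuous-spectrum contribution with the same $\eta$-factors via an explicit computation involving the real-analytic Eisenstein series of each cusp at the critical point $s=n$. Matching these two cusp contributions, and in particular checking that they both carry the same holomorphic weight-$\tfrac12$ modular behavior in the $\tau_i$, is where the bulk of the technical work will lie; once this matching is established, the main identity follows by taking absolute values and reorganizing the factors.
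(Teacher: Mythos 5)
Your proposal takes a genuinely different route from the paper, but as written it is a research program rather than a proof: every step that carries the actual content is asserted rather than established. The paper does not truncate the cusps or touch the continuous spectrum at all. Instead it approximates $M_0$ by a sequence of \emph{closed} manifolds $M_{p,q}$ obtained by $(p_i,q_i)$-Dehn filling for $u\to 0$ in $\mathscr{D}(M_0)$, applies the closed-case identity \eqref{e:main-result-odd} to each $M_{p,q}$, and then tracks three corrections: a Mayer--Vietoris formula \eqref{e:pf-odd-2} relating $\mathcal{T}(M_{p,q},\rho^{2(n-1)})$ to $\mathcal{T}(M_u,\rho^{2(n-1)},\{p_im_i+q_il_i\})$ times $\prod_i\prod_{m=1}^{n-1}(\q_{\gamma_i}^{m}-1)(\q_{\gamma_i}^{-m}-1)$ over the core geodesics $\gamma_i$; Yoshida's surgery formula \eqref{e:pf-odd-3} for the complex volume, which contributes $\prod_i q_{\gamma_i}$; and the separation \eqref{e:pf-odd-4} of the core geodesics from $F_n(M_{p,q})$. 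The Dedekind eta functions then emerge from the \emph{discrete} (geodesic) side, by assembling $q_{\gamma_i}^{1/12}\prod_{m\geq 1}(1-\q_{\gamma_i}^m)^2=\eta(\tilde\tau_i(u))^2$ with $\tilde\tau_i=(r_i+s_i\tau_i)/(p_i+q_i\tau_i)$, and converting $\eta(\tilde\tau_i)$ back to $\eta(\tau_i)$ via the modular transformation law \eqref{e:pf-odd-7} played off against the basis-change determinant $A_{2(n-1)}(u)$ in \eqref{e:pf-odd-A}--\eqref{e:est-quot}. None of this mechanism appears in your outline.

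The concrete gaps in your approach are these. First, the identification of the $Y\to\infty$ limit of the Ray--Singer torsion of the truncated manifold with the Reidemeister torsion in the basis $\{m_i\}$, and the control of the boundary anomaly, are themselves deep results whose known proofs (in the literature this paper builds on) proceed precisely by the Dehn-filling approximation you are trying to avoid; invoking them here is close to circular. Second, your claim that the boundary/continuous-spectrum contribution equals $\prod_i|\eta(\tau_i)|^2$ ``by the standard Kronecker limit formula'' is unjustified and likely wrong as stated: the restriction of $\rho^{2(n-1)}$ to $\pi_1(T_i)$ is a nontrivial \emph{unipotent} representation (the peripheral holonomy is parabolic), so the relevant twisted Laplacian on $(T_i,\tau_i)$ is not the scalar flat Laplacian, its determinant is not the classical $|\eta(\tau_i)|^4$, and its nonzero kernel is exactly what forces the choice of cycles $\{c_i\}$ to fix a basis of $H_*(M_0,\rho^{2(n-1)})$. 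Third, the spectral side for a cusped manifold with the non-unitary twist $\rho^{2(n-1)}$ (Eisenstein series, scattering determinant at $s=n$, Maass--Selberg) is asserted without any argument, and the claimed evaluation of $\det\Phi(n)$ in terms of $\prod_i\eta(\tau_i)$ is speculation. Until these three points are supplied with actual proofs, the proposal does not establish \eqref{e:main thm}.
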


Let us remark that the left hand side of the equality \eqref{e:main thm} does not depend on the choice of a basis $(m_i, l_i)$ of $H_1(T_i,\mathbb{Z})$ since a change of this will also cause a change of $\tau_i$. 
As we can see from its proof in the section \ref{s:pf-main-thm}, these changes cancel each other.
A corresponding equality to \eqref{e:main thm} for $\rho^{2n-1}$ is also given in Theorem \ref{t:theorem-even} where the contributions from cusps are given in terms of the Dedekind eta function and a theta function. 
These equalities can be considered as partial generalizations of the results in \cite{Park17} 
since the equalities in \cite{Park17} hold between complex valued invariants without
modulus sign. Unfortunately the proof used in this paper does not work to handle some terms of modulus 1 appearing in the equalities proved in \cite{Park17}.
In this paper, we present a self-contained proof which does not depend on the main theorems of the previous work \cite{Park17}.
Actually its proof is simpler since we need not deal with more subtle terms of modulus 1 appearing in the results of \cite{Park17}.

For a hyperbolic 3-manifold with cusps $M_0$, by the fundamental work of Thurston \cite{T}, there exists a deformation space $\mathscr{D}(M_0)$
of (in)complete hyperbolic structures on the underlying topological manifold of $M_0$. Let us denote by $M_u$ the corresponding (in)complete hyperbolic 3-manifold for each point $u\in \mathscr{D}(M_0)$. 
There is a corresponding holonomy representation $\rho_u:\pi_1(M_u)\to\mathrm{PSL}(2,\mathbb{C})$, for which one can define a representation $\rho^k_u$ of $\pi_1(M_u)$ as before.
From definitions of Reidemeister torsion and Zograf infinite product, which depend on a hyperbolic structure through $\rho_u$, one can see that these invariants extend to be holomorphic functions
over $\mathscr{D}(M_0)$.  See \eqref{e:Def-Zog} for the definition of the Zograf infinite product for $M_u$. 
By \cite{Y}, the complex volume $\mathbb{V}(M_0)$ also extends to be holomorphic function over an open neighborhood of the origin in $\mathscr{D}(M_0)$. 
These facts and Theorem \ref{t:main theorem} lead the author to make the following conjecture:

\begin{conj}\label{c:odd}
 There exists an open neighborhood $V$ of the origin in $\mathscr{D}(M_0)$ where the following equality holds for $n\geq 3$,
\begin{equation*}\label{e:conjecture} 
  \mathcal{T}(M_u,\rho_u^{2(n-1)}, \{m_i\})^{-12} \, \prod_{i=1}^h  \eta({\tau_i(u)})^{-24} 
=c_{M_0,n} \exp\left( \frac{2}{\pi} (6n^2-6n+1) \mathbb{V}(M_u) \right)\, F_{n}(M_u)^{12} 
 \end{equation*}
where $M_u$ denotes the (in)complete hyperbolic 3-manifold corresponding to $u\in V$ and $c_{M_0,n}$ is a constant depending only on $M_0$ and $n$ with $|c_{M_0,n}|=1$. 
 \end{conj}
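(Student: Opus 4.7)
The plan is to upgrade Theorem~\ref{t:main theorem} from a pointwise modulus identity at $u=0$ to a holomorphic identity on a full neighborhood $V$ of the origin in $\mathscr{D}(M_0)$, up to a constant of modulus one. The strategy rests on three ingredients: holomorphicity of both sides as functions of $u$, a family version of the modulus equality extending Theorem~\ref{t:main theorem}, and the maximum modulus principle applied to the ratio.

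For holomorphicity, the right-hand side is the easier half. The complex volume $\mathbb{V}(M_u)$ is holomorphic on a neighborhood of the origin by \cite{Y}, and the Zograf product $F_n(M_u)$ converges absolutely and locally uniformly in $u$ for $n\geq 3$, hence is holomorphic. On the left, the Reidemeister torsion $\mathcal{T}(M_u,\rho_u^{2(n-1)},\{m_i\})$ depends holomorphically on $\rho_u$ once the cohomology basis is frozen through the fixed curves $\{m_i\}$, and $u\mapsto \tau_i(u)$ is a holomorphic map from a small $V$ into the upper half plane in Thurston's deformation coordinates, so $\prod_i \eta(\tau_i(u))^{-24}$ is holomorphic.

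The substance lies in extending the modulus equality to all of $V$. I see two plausible routes. The direct route reruns the proof of Theorem~\ref{t:main theorem} in Section~\ref{s:pf-main-thm} uniformly in $u\in V$, tracking each ingredient -- the Selberg/Ruelle zeta function, the regularized determinant of the Laplacian on $M_u$ with coefficients in $\rho_u^{2(n-1)}$, and the Eisenstein/cusp contribution encoding $\eta(\tau_i(u))$ -- as a holomorphic family and verifying that the cancellations yielding the moduli identity persist across the deformation. The indirect route fixes a nearby closed hyperbolic Dehn filling $\hat{M}_{u,\mathbf{p},\mathbf{q}}$ of $M_0$, applies the exact equality for closed hyperbolic $3$-manifolds from \cite{Park17}, and then lets $|(\mathbf{p},\mathbf{q})|\to\infty$, identifying the discrepancy between the invariants of $\hat{M}_{u,\mathbf{p},\mathbf{q}}$ and those of $M_u$ with the factor $\prod_i \eta(\tau_i(u))^{-24}$. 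Once the modulus equality is established on $V$, the holomorphic ratio $h(u)$ of the two sides satisfies $|h(u)|\equiv 1$ on a connected $V$, and the open mapping theorem forces $h \equiv c_{M_0,n}$ with $|c_{M_0,n}|=1$.

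The main obstacle is exactly this propagation of the modulus equality. In the direct route, the analytic technique of the present paper yields only moduli precisely because certain unit-modulus phases cannot be tracked, and following them in family is the same subtlety the author flags after Theorem~\ref{t:main theorem} as the reason the arguments of \cite{Park17} do not yet carry over. In the indirect route, the hard point is to match, in the small-geodesic (Dehn-filling) degeneration, the contribution of the core geodesic to $F_n(\hat{M}_{u,\mathbf{p},\mathbf{q}})$ and to the torsion against the Dedekind eta factor $\eta(\tau_i(u))^{-24}$; morally this is the familiar appearance of $\eta$ as a regularized determinant on the limiting torus, but realizing it with holomorphic precision in the Dehn-filling limit -- and verifying that the phase of $c_{M_0,n}$ is indeed globally defined -- is the decisive analytic task.
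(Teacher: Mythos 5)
This statement is a \emph{conjecture} in the paper: the author explicitly states that Theorem \ref{t:main theorem} and the holomorphicity observations ``lead the author to make the following conjecture,'' and offers no proof. So there is no proof in the paper to compare against, and your text is, as you yourself partly acknowledge, a strategy sketch rather than a proof. The decisive gap is the second of your three ingredients, the ``family version of the modulus equality'': that is not a lemma you can invoke, it \emph{is} the conjecture (up to the constant). Theorem \ref{t:main theorem} gives the modulus identity only at the single point $u=0$. At the Dehn-surgery points $u\neq 0$ the paper's argument produces equalities such as \eqref{e:pf-odd-10} only with error terms $\varepsilon_1(u),\varepsilon_2(u)$ that vanish in the limit $u\to 0$, so you do not even have exact modulus equality on the discrete set of fillable $u$, let alone on an open set. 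Without $|h(u)|\equiv 1$ on an open connected set, the maximum-modulus/open-mapping step has nothing to act on; and knowing $|h(u_j)|\to 1$ along a sequence accumulating at the origin does not force the pluriharmonic function $\log|h|$ to vanish identically.

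Your ``indirect route'' is closer to what a proof would have to look like, but it runs into precisely the obstruction the author flags after Theorem \ref{t:main theorem}: the exact complex-valued identity for closed manifolds in \cite{Park17} carries an extra unit-modulus factor (the torsion $\mathcal{T}_0$ of the zero generalized eigenspaces, cf.\ Remark \ref{r:zero-torsion}) whose dependence on $(p,q)$ is uncontrolled, and one would additionally have to track the \emph{phases} (not just moduli) of the core-geodesic contributions, of the integer $I$ in the eta transformation law \eqref{e:pf-odd-7}, and of the sign ambiguities of the Reidemeister torsion through the Dehn-filling degeneration. Even granting all of that, you would obtain exact equality only on a countable set of points in $V\subset\mathbb{C}^h$ accumulating at the origin, and in several variables such a set is not automatically a set of uniqueness for holomorphic functions, so a further argument (e.g., density of the Dehn-surgery directions, or restriction to one-parameter families) would be needed before the identity theorem applies. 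None of these steps is carried out, so the proposal does not constitute a proof; it is a reasonable road map that correctly locates, but does not cross, the obstacles that make this a conjecture.
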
   

Let us remark that we need to take 12-th power of the equality \eqref{e:main thm} to have well-defined complex functions over $V\subset \mathscr{D}(M_0)$
as explained in \cite{Park17}. The equality conjectured above should be  also compared with main results in \cite{MT}, \cite{MP}.

Now let us explain the structure of this paper. In Section 2, we review some basic facts which are used in the proofs of main results of this paper.
In Section 3, we prove the concerning equality for compact hyperbolic 3-manifolds. In Section 4, we prove Theorem \ref{t:main theorem}.
In Section 5, we prove the corresponding equality to Theorem \ref{t:main theorem} for hyperbolic 3-manifolds with cusps and the representation
$\rho^k$ with odd $k$.

\subsection*{Acknowledgements}
A part of this work was performed while the author visited Research Institute for Mathematical Sciences at Kyoto University. He wants to express gratitude to professor K. Yoshikawa
for his help and hospitality during this period.
This work was partially supported by Samsung Science and Technology Foundation under Project Number SSTF-BA1701-02.

 \section{Basic Materials} 

\subsection{Deformation space of hyperbolic structures}\label{ss:deformation} Suppose that $M_0$ is a complete hyperbolic $3$-manifold of finite volume with $h$ cusps. 
Then $M_0$ has an ideal triangulation \[ M_0= \Delta(z^0_1)\cup \cdots \cup
\Delta(z^0_n). \] Here $\Delta(z^0_i)$ is an ideal tetrahedron described (up to isometry) by the complex number $z^0_i$ 
in the upper half plane such that the Euclidean triangle cut out of any vertex of $\Delta(z^0_i)$ by a
horosphere section is similar to the triangle with vertexes $0$, $1$ and $z^0_i$. If we deform $(z^0_1,\ldots, z^0_n)$ to $(z_1,\ldots,z_n)$ slightly with 
$\mathrm{Im}\, z_i>0$, $i=1,\ldots,n$, then we obtain a complex
$\Delta(z_1)\cup\cdots\cup \Delta(z_n)$ with the same gluing pattern as $M_0$. The necessary and sufficient condition that $\Delta(z_1)\cup\cdots\cup \Delta(z_n)$ 
gives a smooth (not necessarily complete) hyperbolic manifold is that at each edge $e$ of
$\Delta(z_1)\cup\cdots\cup \Delta(z_n)$ the tetrahedron $\Delta(z_i)$ abutting $e$ close up as one goes around $e$, 
and thus the product of the corresponding moduli of $\Delta(z_i)$ at $e$ is $\exp(2\pi i)$ (the product is taken in the universal cover of
${\mathbb{C}}^*$). The consistency condition at $e$ is written as \begin{equation*}\label{e:consis} \prod^n_{i=1} z^{r_i}_i(1-z_i)^{r'_i}=\pm 1 \end{equation*} for some integers $r_i,r'_i$ depending on $e$. Once we have chosen the
numbers $z_i$ satisfying the consistency conditions, $\Delta(z_1)\cup\cdots\cup \Delta(z_n)$ acquires a smooth hyperbolic structure, in general, incomplete. The deformation space $\mathscr{D}(M_0)$ of 
the hyperbolic structures on the underlying topological manifold of $M_0$ is the variety of
$u=(z_1,\ldots,z_n)\in\mathbb{C}^n$ which satisfies the consistency conditions.

Choose a pair of simple closed curves $(m_i,l_i)$ on each torus section $T_i$ of the $i$-th cusp which forms a basis of $H_1(T_i, \mathbb{Z})$. For each 
$z=(z_1,\ldots,z_n)\in \mathscr{D}(M_0)$, let $\rho_z: \pi_1(M_0)\to \mathrm{PSL}(2,\mathbb{C})$ be
a holonomy representation of the corresponding (in)complete hyperbolic manifold $\Delta(z_1)\cup\cdots\cup \Delta(z_n)$. 
We may consider $(m_i,l_i)$ as elements of $\pi_1(M_0)$. If $\rho_z(m_i)$ and $\rho_z(l_i)$ are not parabolic, they have two
fixed points in $\mathbb{C}\cup \{\infty\}$ which we can put at $0$ and $\infty$, so as M\"obius transformations on $\mathbb{C}\cup\{\infty\}$, \[ \rho_u(m_i): w \to a_i w, \qquad \rho_u(l_i): w\to b_i w \]for some $a_i,b_i\in
\mathbb{C}^*$. Set $u_i=\log a_i$ and $v_i=\log b_i$. If $\rho_z(m_i)$ and $\rho_z(l_i)$ are parabolic, we set $u_i=v_i=0$. By \cite{T}, \cite{NZ} we have

\begin{theorem} \label{t:Thurston} {\rm (Thurston \cite{T}, Neumann-Zagier \cite{NZ})} The deformation space $\mathscr{D}(M_0)$ of hyperbolic structures on the underlying topological manifold of $M_0$  
can be holomorphically parametrized by
$(u_1,\ldots,u_h)\in\mathbb{C}^h$ in a neighborhood $V$ of the origin $0=(0,\ldots,0)$ in $\mathscr{D}(M_0)$. For $i=1,\ldots,h$, there are holomorphic functions $\tau_i(u)$ over $V$ such that $v_i= \tau_i(u) u_i$ and $\tau_i(0)$
is in the upper half plane and is the modulus of the Euclidean structure on the torus section $T_i$ associated to the $i$-th cusp of $M_0$ (with respect to $m_i,l_i$). 
\end{theorem}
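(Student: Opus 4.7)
The plan is to follow the Thurston--Neumann-Zagier strategy: first establish smoothness and dimension of $\mathscr{D}(M_0)$ at the complete structure, then show that the logarithmic meridian holonomies $u_i$ serve as local holomorphic coordinates, and finally extract $\tau_i(u)$ from the relation between $u_i$ and $v_i$. I would cut out $\mathscr{D}(M_0) \subset \mathbb{C}^n$ as the zero locus of the $n$ logarithmic edge-consistency equations $\sum_j r_{e,j}\log z_j + r'_{e,j}\log(1 - z_j) \in 2\pi i\,\mathbb{Z}$, one per edge $e$. An Euler characteristic argument for an ideal triangulation of a cusped hyperbolic $3$-manifold gives (number of edges) $=$ (number of tetrahedra) $= n$, so these are $n$ equations in $n$ unknowns. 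The key nontrivial input, due to Thurston, is that at the complete solution $z^0$ the Jacobian of this system has rank exactly $n - h$: there are $h$ independent linear relations among the edge gradients, one per cusp, produced by a Poincar\'e-duality pairing on the triangulation. The holomorphic implicit function theorem then shows that $\mathscr{D}(M_0)$ is smooth of complex dimension $h$ near $z^0$.

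Next I would prove that $(u_1,\ldots,u_h)$ restrict to local holomorphic coordinates on $\mathscr{D}(M_0)$. Writing $\rho_z(m_i)$ as a product of face-pairing M\"obius transformations built from the $z_j$, each $u_i = \log a_i$ is a holomorphic function on $\mathbb{C}^n$ near $z^0$, and one computes $du_i \in T_{z^0}^*\mathbb{C}^n$ explicitly in terms of $z^0$. The crucial point is that the restrictions of $du_1, \ldots, du_h$ to $T_{z^0}\mathscr{D}(M_0)$ are linearly independent; this is Neumann-Zagier's central observation, proved via the same Poincar\'e-type pairing used above, with each $du_i$ dual to the homology class of the $i$-th cusp. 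The holomorphic inverse function theorem then realizes $(u_1,\ldots,u_h)$ as biholomorphic coordinates on a neighborhood $V$ of the origin, and each $v_i = \log b_i$ becomes a holomorphic function of $u$.

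When the $i$-th cusp is complete the meridian holonomy is parabolic, so $u_i = 0$. Since $m_i$ and $l_i$ commute in $\pi_1(T_i)$, the commuting pair $\rho_u(m_i), \rho_u(l_i) \in \mathrm{PSL}(2,\mathbb{C})$ must share fixed point sets; a parabolic element cannot commute with a loxodromic one, so the longitude is parabolic as well and $v_i = 0$ on the entire locus $\{u_i = 0\}$. Therefore $\tau_i(u) := v_i(u)/u_i$ extends to a holomorphic function on $V$. To identify $\tau_i(0)$ with the modulus of the Euclidean torus section $T_i$, I would pass to the horospherical quotient at the $i$-th cusp of $M_0$: the developing map realizes $T_i$ as $\mathbb{C}/\langle \omega_i, \omega_i'\rangle$, where $\omega_i, \omega_i'$ are the translations lifting $m_i, l_i$, and a direct expansion of $\rho_u(m_i), \rho_u(l_i)$ around the parabolic fixed point shows $\omega_i'/\omega_i = \lim_{u\to 0} v_i/u_i = \tau_i(0)$. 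The upper-half-plane condition follows from the orientation of $T_i$ inherited from $M_0$ together with the choice of $(m_i, l_i)$ as a positively oriented basis.

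The main obstacle is the independence of the $du_i$ after restriction to $T_{z^0}\mathscr{D}(M_0)$ in the second step. This is the substance of Neumann-Zagier's analysis and depends on the symplectic structure of the character variety of $\pi_1(\partial M_0) = \bigsqcup_i \pi_1(T_i)$, in which $\langle du_i\rangle$ and $\langle dv_i\rangle$ form complementary Lagrangian subspaces; the independence is ultimately established by an explicit computation with the incidence data of edges, vertices, and tetrahedra of the ideal triangulation, together with Mostow-type rigidity of the complete structure.
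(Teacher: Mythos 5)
The paper contains no proof of this statement: Theorem \ref{t:Thurston} is quoted as a known result of Thurston and Neumann--Zagier with references \cite{T}, \cite{NZ}, and the rest of the paper simply uses it. So the only meaningful comparison is with the cited sources, and your sketch does follow the Neumann--Zagier route faithfully: the gluing variety in $\mathbb{C}^n$ cut out by $n$ edge equations (with $\#\mathrm{edges}=\#\mathrm{tetrahedra}=n$ by the Euler characteristic count), corank $h$ of the Jacobian at the complete solution, $(u_1,\ldots,u_h)$ as local holomorphic coordinates, divisibility of $v_i$ by $u_i$ from the vanishing of $v_i$ on the locus $\{u_i=0\}$ (your parabolic-versus-loxodromic commutation argument there is correct), and identification of $\tau_i(0)$ with the cusp modulus via the similarity structure on the torus link.

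Two steps as written need repair, though both are repairable by standard arguments. First, from ``the Jacobian has rank exactly $n-h$'' the holomorphic implicit function theorem does not by itself yield smoothness of the common zero locus of all $n$ equations: a rank-deficient system can cut out something singular or of dimension smaller than $h$. What saves the argument is that the $h$ cusp relations hold \emph{identically} as functional identities, not merely at $z^0$: around each cusp the product of the edge-consistency functions, counted with multiplicities, is identically $1$ (each triangle of the cusp link contributes moduli multiplying to $-1$, and the link triangulation has an even number of triangles), so $h$ of the equations are genuinely redundant and the implicit function theorem may be applied to $n-h$ independent ones. Calling these relations a ``Poincar\'e-duality pairing'' is also a mischaracterization: they are elementary telescoping identities, while duality/rigidity enters only in the complementary lower bound $\mathrm{rank}\geq n-h$ together with the independence of $du_1,\ldots,du_h$ on the tangent space --- the infinitesimal-rigidity input you correctly isolate as the main obstacle. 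Second, holomorphy of $u_i$ and $v_i$ across the parabolic locus should not be extracted from eigenvalues or fixed points, which branch and coalesce precisely at parabolic points; the correct device, and Neumann--Zagier's, is the explicit formula expressing the logarithm of the derivative of the peripheral holonomy as an integer linear combination of $\log z_j$ and $\log(1-z_j)$ read off from the cusp triangulation. Your phrase ``product of face-pairing M\"obius transformations'' is presumably intended in this sense, but it deserves to be said precisely, since at $z^0$ itself the two fixed points of $\rho_z(m_i)$ collide and $a_i$ has no fixed-point definition there. With these two repairs your outline matches the proof in the cited literature.
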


We denote by $M_u$ the (in)complete hyperbolic 3-manifold corresponding to the point $u=(u_1,\ldots,u_h)\in \mathscr{D}(M_0)$.

By the second statement in Theorem \ref{t:Thurston}, if $u$ is near the origin and $u_i\neq 0$, then $v_i$ is not a real multiple of $u_i$. Hence there is a unique solution $(p_i,q_i)\in\mathbb{R}^2\cup\{\infty\}$ to
\begin{equation}\label{e:prime} p_i u_i+q_i v_i= 2\pi i. \end{equation} We take $(p_i,q_i)=\infty$ if $u_i=0$. This $(p_i,q_i)$ is called the generalized \emph{Dehn surgery coefficient} by Thurston \cite{T}. If each $(p_i,q_i)$ is
a pair of coprime integers, $M_u$ can be completed to a closed hyperbolic manifold denoted by $M_{{p},{q}}$, where ${p}=(p_1,\ldots,p_h)$, $q=(q_1,\ldots, q_h)$, by $(p_i,q_i)$-hyperbolic Dehn surgery to each end of $M_u$.

\subsection{Volume and Chern-Simons invariant}\label{ss:comp-vol}  Over the frame bundle $F(M_u)$ there is a $3$-form $C$ given by \begin{align*} C=&\frac{1}{4\pi^2} \big( 4\theta_1\wedge \theta_2\wedge \theta_3 -d(\theta_1\wedge\theta_{23} +
\theta_2\wedge\theta_{31} +\theta_3\wedge\theta_{12}) \big)\\
  &\ \ +\frac{i}{4\pi^2} \big( \theta_{12}\wedge\theta_{13}\wedge \theta_{23} -\theta_{12}\wedge\theta_1\wedge\theta_2-\theta_{13}\wedge\theta_{1}\wedge\theta_3 -\theta_{23}\wedge\theta_2\wedge\theta_3 \big).
\end{align*} Here $\theta_i$, $\theta_{ij}$ denote the fundamental form and the connection form respectively of the Riemannian connection on $F(M_u)$. Let $s_u$ be the section defined by an orthonormal framing $\mathcal{F}_u$ on a
subset of $M_u$ such that $s_u^* C$ vanishes over $h$ ends of $M_u$. It is called the \emph{simple framing} by Yoshida \cite{Y}. Since $s_u$ satisfies this vanishing condition over the ends, there is an obstruction for $s_u$ to be
defined over whole $M_u$, which is given by a link $L$ inside of $M_u$. Hence, $s_u$ is a section from $M_u\setminus L$ to $F(M_u)$. Let $\kappa_u$ be an orthonormal framing over a tubular neighborhood of $L$ such that its first
component is tangent to $L$ and has the same direction as the first component of $\mathcal{F}_u$ near $L$. For $u\in\mathscr{D}(M_0)$, 
the following complex function is defined by Yoshida \cite{Y}, 
\begin{equation} \label{e:def-f}
f(u) =
\int_{s_u(M_u\setminus L) } C - \frac{1}{2\pi}\int_{s_u(L)} (\theta_1-i\theta_{23}), \end{equation} where $s_u: M_u\setminus L\to F(M_u)$ and $s_u:L\to F(M_u)$ are the sections defined by $\mathcal{F}_u$ and $\kappa_u$
respectively. By the construction, the complex function $f(u)$ defines the complex volume of $M_u$ by $\mathbb{V}(M_u)=2\pi f(u)$ for $u\in\mathscr{D}(M_0)$, in particular, for $M_0$.

The following theorem was conjectured by Nuemann and Zagier \cite{NZ} and was proved by Yoshida \cite{Y}.

\begin{theorem} \label{t:Yoshida} {\rm(Yoshida \cite{Y})} Over a neighborhood $V$ of the origin in  $\mathscr{D}(M_0)$, the complex function $f$ is holomorphic. 
If $u\in V$ represents the hyperbolic manifold $M_u$ which can be completed to
a closed hyperbolic manifold $M_{p,q}$ by $(p_i,q_i)$-hyperbolic Dehn surgery to each end of $M_u$, then 
\begin{align*} \mathrm{Re} f(u) &= \frac{1}{\pi^2} \mathrm{Vol}(M_{p,q}) +\frac{1}{2\pi}\sum_{i=1}^h
\mathrm{length}(\mathbf{g}_i),\\ \mathrm{Im} f(u)&= 2\, CS(M_{p,q})\ +\ \frac{1}{2\pi} \sum_{i=1}^h\mathrm{torsion}(\mathbf{g}_i)  \ \ (\mathrm{mod}\ \mathbb{Z}) \end{align*} 
where $\mathrm{length}(\mathbf{g}_i)$ and $\mathrm{torsion}(\mathbf{g}_i)$ denote the length and the torsion of the closed geodesic $\mathbf{g}_i$ adjoined to the $i$-th end of $M_u$ respectively. \end{theorem}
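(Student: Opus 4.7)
My plan is to approach Theorem \ref{t:main theorem} by a limiting/degeneration argument from the closed hyperbolic case, using the hyperbolic Dehn surgery parametrization provided by Theorem \ref{t:Thurston}. The idea is to pick a sequence of coprime integer surgery coefficients $(p_i^{(k)}, q_i^{(k)}) \to \infty$ so that the corresponding closed hyperbolic manifolds $M_{p^{(k)},q^{(k)}}$ converge geometrically to the cusped manifold $M_0$. For each such closed manifold the analogue of \eqref{e:main thm} (without the $\eta$ and without modulus issues of purely cuspidal nature) will be proved in Section~3 of the paper; the task is then to extract Theorem \ref{t:main theorem} by a careful asymptotic analysis of every factor as the cusps reopen.

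First I would set up the degeneration. By Theorem \ref{t:Thurston} we write the deformation parameter as $u = (u_1,\dots,u_h) \in V$, with $u = 0$ representing $M_0$; the Dehn surgery condition \eqref{e:prime} determines a sequence $u^{(k)} \to 0$ with $u^{(k)} \neq 0$. Next I would record how each of the three invariants in \eqref{e:main thm} transforms when we move along this sequence. For the complex volume this is exactly Yoshida's Theorem \ref{t:Yoshida}, which yields $\mathbb{V}(M_{p,q}) = \mathbb{V}(M_0) + \frac{1}{2}\sum_i \bigl(\mathrm{length}(\mathbf{g}_i) + i\,\mathrm{torsion}(\mathbf{g}_i)\bigr) + o(1)$ after absorbing the explicit boundary term from \eqref{e:def-f}, so in the limit only $\mathbb{V}(M_0)$ survives on the right-hand side (modulo the cuspidal geodesic contributions, which must be matched against the other factors). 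For the Zograf product I would separate the geodesics of $M_{p,q}$ into two groups: the stable geodesics that persist as $u^{(k)} \to 0$ and converge to primitive geodesics in $M_0$, and the short geodesics $\mathbf{g}_i$ adjoined by Dehn surgery, whose lengths $\ell(\mathbf{g}_i) \to 0$. The stable part converges to $F_n(M_0)$, while the short geodesics $\mathbf{g}_i$ contribute an expression $\prod_{m \geq n}(1-q_{\mathbf{g}_i}^m)$ whose modulus, after combining with the corresponding contribution $\exp((n^2-n+\tfrac16)\mathbb{V}(M_{p,q})/\pi)$, can be reorganized via the Kronecker limit/Jacobi triple product type identity that relates $\prod_{m \geq 1}(1-e^{2\pi i m \tau})$ to $\eta(\tau)$.

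For the Reidemeister torsion the analysis is more delicate. I would use the surgery formula describing $\mathcal{T}(M_{p,q},\rho^{2(n-1)})$ in terms of $\mathcal{T}(M_u,\rho_u^{2(n-1)},\{m_i\})$ together with local factors associated to the filled solid tori; these local factors, evaluated at the $\mathrm{PSL}(2,\mathbb{C})$-characters determined by $u^{(k)}$, are explicit trigonometric expressions in $u_i^{(k)}$ and $v_i^{(k)} = \tau_i(u^{(k)}) u_i^{(k)}$. Passing to the limit, after cancelling poles and zeros against the short geodesic contributions coming from $F_n$ and from the boundary term in $\mathbb{V}$, one should see the combination $\prod_i |\eta(\tau_i(0))|^2$ emerge precisely because the infinite product $\prod_{m \geq 1}(1-q_{\mathbf{g}_i}^m)^2$ in the Zograf/torsion limit is the modulus squared of $\eta$ (up to the exponential prefactor $e^{-\pi \mathrm{Im}\tau_i/6}$, which is absorbed into the $(n^2-n+\tfrac16)\mathbb{V}$ factor).

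The hardest step, and the one the author flags explicitly in the discussion surrounding the theorem, is controlling the phase ambiguities: Reidemeister torsion, the Chern--Simons part of $\mathbb{V}$, and $\eta$ all carry nontrivial phases that are hard to pin down under the surgery limit. Taking absolute values throughout removes exactly the terms of modulus one that caused difficulty in \cite{Park17}, so I would work with $|\cdot|$ from the start and reduce the whole identity to a statement about the real parts, i.e.\ about $\mathrm{Vol}(M_u)$, $\mathrm{length}(\mathbf{g}_i)$, $\log|\eta|$, and $\log|\mathcal{T}|$. The core analytic input is then the asymptotics, as $u^{(k)} \to 0$, of the real-part counterparts of the torsion--volume--Zograf identity proved in Section~3, together with the classical limit formula $-\log|\eta(\tau)|^2 = \pi \mathrm{Im}\tau/6 - \sum_{m \geq 1} 2\log|1-e^{2\pi i m \tau}|$, which is exactly what converts the cusp contribution into $\prod_i \eta(\tau_i)^2$ on the left-hand side. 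Once this matching is verified cusp by cusp the equality \eqref{e:main thm} follows.
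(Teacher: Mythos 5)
There is a fundamental mismatch here: the statement you were asked to prove is Theorem \ref{t:Yoshida}, i.e.\ Yoshida's theorem that the function $f$ of \eqref{e:def-f} is holomorphic on a neighborhood of the origin in $\mathscr{D}(M_0)$, together with the explicit formulas expressing $\mathrm{Re}\,f(u)$ and $\mathrm{Im}\,f(u)$ in terms of $\mathrm{Vol}(M_{p,q})$, $\mathrm{CS}(M_{p,q})$, and the lengths and torsions of the adjoined geodesics $\mathbf{g}_i$. Your proposal instead sketches a proof of Theorem \ref{t:main theorem}. Nothing in your write-up addresses holomorphicity of $f$, and the real/imaginary-part identities are never derived --- in fact your second paragraph explicitly invokes ``Yoshida's Theorem \ref{t:Yoshida}'' as an input, so read as a proof of that theorem your argument is circular. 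For the record, the paper itself offers no proof of this statement either: it is quoted as a known result, conjectured by Neumann--Zagier \cite{NZ} and proved by Yoshida \cite{Y}, so there is no internal proof to compare against; but your text cannot be accepted as a proof of the statement, since it proves (at best) something else while assuming the statement itself.

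To actually prove Theorem \ref{t:Yoshida} you would need a completely different toolkit. The starting point is the definition \eqref{e:def-f}: one must show that the integral of the Chern--Simons--type form $C$ over the simple-framing section $s_u(M_u\setminus L)$, corrected by the line integral of $\theta_1 - i\theta_{23}$ along $s_u(L)$, depends holomorphically on the deformation parameter $u$. This rests on the analytic dependence of the developing maps and of $(u_i, v_i)$ established by Thurston and Neumann--Zagier, together with variational (Schl\"afli-type) formulas for $\mathrm{Vol}$ and $\mathrm{CS}$ along $\mathscr{D}(M_0)$ showing that the Cauchy--Riemann equations couple the two real-valued pieces of $f$. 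For the Dehn-filling formulas, the key geometric step is to compare the simple framing $\mathcal{F}_u$ on the cusped-end region of $M_u$ with a framing extending over the filled solid tori in $M_{p,q}$: the discrepancy concentrates along the core geodesics $\mathbf{g}_i$ and, upon integrating $C$ and the correction form, produces exactly the terms $\frac{1}{2\pi}\,\mathrm{length}(\mathbf{g}_i)$ and $\frac{1}{2\pi}\,\mathrm{torsion}(\mathbf{g}_i)$ (the latter only mod $\mathbb{Z}$, reflecting the ambiguity of the framing obstruction and of $\mathrm{CS}$). Yoshida's proof in \cite{Y} also passes through the $\eta$-invariant and the Atiyah--Patodi--Singer index machinery to control the imaginary part. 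None of these ingredients --- the framing comparison, the variational formulas, the $\eta$-invariant analysis --- appears in your proposal, so as it stands there is no proof of the assigned statement at all.
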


\subsection{Reidemeister torsion} For an $n$-dimensional vector space over $\mathbb{C}$, let $v=(v_1,\ldots, v_n)$ and $w=(w_1,\ldots, w_n)$ are two bases for it. 
Let $[w/v]$ denote the determinant of the matrix $T$ representing
the change of base from $v$ to $w$, that is, $w_i=\sum t_{ij} v_j$. Suppose 
\begin{equation*} C: C_N\ \stackrel{\partial}{\rightarrow}\ C_{N-1}\ {\rightarrow} \ \cdots \ {\rightarrow}\  C_1\ {\rightarrow}\ C_0 \end{equation*} is a
chain complex of finite complex modules. Let $Z_q$ denote the kernel of $\partial$ in $C_q$, $B_q\subset C_q$ the image of 
$C_{q+1}$ under $\partial$, and $H_q(C)=Z_q/B_q$ the $q$-th homology group of $C$. Choose a base $b_q$ for
$B_q$ for each $q$, and let $\tilde{b}_{q-1}$ be an independent set in $C_q$ such that $\partial \tilde{b}_{q-1}= b_{q-1}$, and $\tilde{h}_q$ be an independent 
set in $Z_q$ representing a base $h_q$ of $H_q(C)$. Then $(b_q,
\tilde{h}_q, \tilde{b}_{q-1})$ is a base for $C_q$. For a given preferred base $c_q$ for $C_q$, note that
$[b_q,\tilde{h}_q, \tilde{b}_{q-1}/c_q]$ depends only on $b_q$, $h_q$, $b_{q-1}$, hence we denote it by $[b_q,{h}_q,
{b}_{q-1}/c_q]$. The \emph{torsion} $\mathcal{T}(C)$ of the chain complex $C$ is the nonzero complex number defined by 
\begin{equation*} \mathcal{T}(C)= \prod_{q=0}^N [b_q,{h}_q, {b}_{q-1}/c_q]^{(-1)^q}. \end{equation*} Note that $\mathcal{T}(C)$
depends only on the choice of the bases $c_q$, $h_q$, but not on the choice of the bases $b_q$.

Let $K$ be a finite cell complex and $\tilde{K}$ the simply connected covering space of $K$ with the fundamental group $\pi_1$ of $K$ 
acting as deck transformations on $\tilde{K}$. Regarding that $\tilde{K}$ is a just the set of
translates of a fundamental domain under $\pi_1$, the chain complex groups $C_q(\tilde{K})$ become modules over the complex group algebra $\mathbb{C}(\pi_1)$ 
with a preferred base consisting of the cells of $K$. Relative to these
preferred base, the boundary operator on the left $\mathbb{C}(\pi_1)$-module $C_q(\tilde{K})$ is a matrix with coefficients in $\mathbb{C}(\pi_1)$. 
For a representation $\chi$ of $\pi_1(K)$ into $\mathrm{SL}(N,\mathbb{C})$, define the chain complex
$C(K,\chi)$ by 
\begin{equation*} C_q(K,\chi) = \mathbb{C}^N\otimes _{\mathbb{C}(\pi_1)} C_q(\tilde{K}) \end{equation*} 
where $\mathbb{C}^N$ is considered as right $\mathbb{C}(\pi_1)$-module via the action of $\chi$. We choose a
preferred base $x_i\otimes e_j$ where $x_i$ runs through a base for $\mathbb{C}^N$ and $e_j$ through the preferred base of 
$C_q(\tilde{K})$ consisting of cells of $K$. Now the \emph{Reidemeister torsion} $\mathcal{T}(K,\chi)$ attached to
the representation $\chi$ is defined by 
\begin{equation*} \mathcal{T}(K,\chi)=\mathcal{T}(C(K,\chi)). \end{equation*} 
A different choice of the preferred bases $e_j$ can give at most sign change of $\mathcal{T}(K,\chi)$ since $\chi$ is a
representation into $\mathrm{SL}(N,\mathbb{C})$. A different choice of the base $x'_i$ for $\mathbb{C}^N$ can 
also give the change by the factor $[x'/x]^{\chi(C)}$ where $\chi(C)$ denotes the Euler characteristic of $C$. Hence, if
$\chi(C)=0$, the Reidemeister torsion $\mathcal{T}(C(K,\chi))$ 
is well-defined as an invariant with a value in $\mathbb{C}^*/\{\pm 1\}$ depending only on the choice of the bases $h_q$ for $H_q(C)$. By \cite{Mi}, it is known that
$\mathcal{T}(C(K,\chi))$ is a combinatorial invariant of $(K, \chi)$. Hence, if $X$ is a compact oriented manifold, any smooth triangulation of 
$X$ gives the same Reidemeister torsion. We denote it by $\mathcal{T}(X,\chi)$.

As before let $M_u$ be a hyperbolic $3$-manifold of finite volume for $u\in \mathscr{D}(M_0)$. 
The holonomy representation $\rho_u:\pi_1(M_u)\to \mathrm{PSL}(2,\mathbb{C})$ can be lifted to $\mathrm{SL}(2,\mathbb{C})$, and the set of such lifts
is in canonical bijection with the set of spin structures on $M_u$. 
For any $k\in\mathbb{N}$, there exists one $k$-dimensional complex irreducible representation $V_k$ of $\mathrm{SL}(2,\mathbb{C})$, which is isomorphic to
$\mathrm{Sym}^{k-1} V_2$, the $(k-1)$-th symmetric tensor of the standard representation $V_2\cong \mathbb{C}^2$. 
Let us compose a lift of $\rho_u$ with $\mathrm{Sym}^{k} V_2$, which is denoted by $\rho_u^{k}$. We can consider
the Reidemeister torsion $\mathcal{T}(M_u, \rho^k_u)$ attached to the representation $\rho^k_u$. In general, this may depend on the choice of bases for $H_*(M_u, \rho^k_u)$.

Let us remark on the convention of notation for $\rho^k_u$ of this paper. We follow the convention of \cite{Park17} where $k$ in $\rho^k_u$ denotes the $k$-th symmetric tensor $\mathrm{Sym}^{k} V_2$.
Hence, this convention is different from some other literatures, for instance, \cite{MFP} where $k$ in $\rho^k_u$ denotes the dimension of $\mathrm{Sym}^{k-1} V_2$.

\subsection{Acyclic spin structure}\label{ss:spin}
A spin structure on $M_0$ naturally induces a spin structure on $M_u$ for $u\in\mathscr{D}(M_0)$. But, a spin structure on $M_0$ can be extended to a spin structure on $M_{p,q}$
with $p=(p_1,\hdots,p_h)$, $q=(q_1,\hdots,q_h)$ obtained by $(p_i,q_i)$-hyperbolic Dehn surgery to each end of $M_u$ only under a condition.
By Proposition 5.2 in \cite{MFP},
a necessary and sufficient condition for this is that \begin{equation}\label{e:spin-cond} \varepsilon_{m_i}^{p_i} \varepsilon_{l_i}^{q_i}=-1 \qquad \text{for \ $i=1,\hdots,h$.} \end{equation} Here $\varepsilon_{m_i}$,
$\varepsilon_{l_i}$ denote the sign of the trace of a $\mathrm{SL}(2,\mathbb{C})$-lifting of $\rho_u(m_i)$, $\rho_u(l_i)$ respectively.
A spin structure on $M_0$ is called \emph{compactly approximable} if there are
infinitely many $p=(p_1,\ldots,p_h)$, $q=(q_1,\hdots,q_h)$ satisfying the conditions \eqref{e:prime} and \eqref{e:spin-cond}. In other words, a spin structure on
$M_0$ is compactly approximable
if there is a sequence $\{M_{p,q}\}$ of infinitely many spin closed hyperbolic manifolds such that the spin structure on $M_{p,q}$ is induced 
from the one of $M_0$.

When $k=2n$,  the representation $\rho_u^{2n}$ 
does not depend on the choice of a spin structure on $M_u$ since the representation
$V_{2n+1}$ factors through $\mathrm{PSL}(2,\mathbb{C})$. But, the homology groups $H_*(M_u,\rho^{2n}_u)$ need not vanish in general. The Reidemeister torsion 
$\mathcal{T}(M_u,\rho_u^{2n})$ is an invariant of $(M_u,\rho_u^{2n})$ valued in $\mathbb{C}^*/\{\pm 1\}$ depending on the choice of
bases of $H_q(M_u,\rho_u^{2n})$.
By Proposition 5.10 in \cite{MFP},  a collection  $\{c_i\}$ of cycles in $H_1(T_i,\mathbb{Z})$ induces a basis of $H_*(M_u, \rho_u^{2n})$.
Hence, the Reidemeister torsion $\mathcal{T}(M_u,\rho_u^{2n})$ can be considered as an invariant of $(M_u,\rho_u^{2n},\{c_i\})$ which we denote by $\mathcal{T}(M_u,\rho_u^{2n},
\{c_i\})$. 

When $k=2n-1$, by Corollary 5.3 in \cite{MFP}, a spin structure on $M_0$ is compactly approximable if and only if it is \emph{acyclic}, that is, 
$H_*(M_0,\rho^{2n-1}_0)=0$ for all $n\in\mathbb{N}$ where $\rho^{2n-1}_0$ is defined by the chosen spin structure. 
Moreover, by the upper semicontinuous property of the
dimension of $H_*(M_u,\rho^{2n-1}_u)$ (see the section 3 of \cite{MFP}), there exists an open neighborhood $V$ of the origin in $\mathscr{D}(M_0)$ 
such that $H_*(M_u,\rho^{2n-1}_u)=0$ for $u\in V$. Hence, the Reidemeister torsion
$\mathcal{T}(M_u,\rho^{2n-1}_u)$ is well defined invariant valued in $\mathbb{C}^*/\{\pm 1\}$ for $u\in V$ if a spin structure over $M_0$ is acyclic. 

\subsection{Ruelle zeta function and Zograf infinite product} 
Let $M$ be a hyperbolic $3$-manifold with finite volume, that is, $M$ may be a compact hyperbolic $3$-manifold, or a noncompact hyperbolic $3$-manifold with cusps. 
For the holonomy representation $\rho:\pi_1(M)\to \mathrm{PSL}(2,\mathbb{C})$, the hyperbolic $3$-manifold $M$ can be realized by the quotient $\Gamma\backslash \mathbf{H}^3$ where
$\Gamma:=\rho(\pi_1(M))\subset \mathrm{PSL}(2,\mathbb{C})$. 
For such a discrete subgroup $\Gamma$, the \emph{critical exponent} $\delta(\Gamma)$ is defined by
\begin{equation}
\delta(\Gamma)= \mathrm{inf} \left\{ s\ \big{\vert} \ \sum_{\gamma\in\Gamma} e^{-s \ell_\gamma} < \infty \right\}.
\end{equation}
It is known that $\delta(\Gamma)<2$ for a discrete group $\Gamma\subset \mathrm{PSL}(2,\mathbb{C})$ as above. 

The \emph{Ruelle zeta function} attached to $\rho^{k}$ is defined by 
\begin{equation*} 
R_{\rho^{k}}(s) = \prod_{[\gamma]} \mathrm{det}\big(\mathrm{Id}
- D_\gamma^{k}\ e^{-s\, l_{\gamma}} \big) \qquad \ \text{for \ \ $\mathrm{Re} (s) > 2+\frac{k}{2}$}. 
\end{equation*} 
Here the product is taken over the set of conjugacy classes of the primitive loxodromic elements $\gamma$ in $\Gamma$,
which can be conjugated to $D_\gamma:=\left(\begin{smallmatrix} e^{\frac12(l_\gamma+i\theta_\gamma)} & 0 \\ 0 &  e^{-\frac12(l_\gamma+i\theta_\gamma)} \end{smallmatrix}\right)$ in $\mathrm{PSL}(2,\mathbb{C})$.
Note that a spin structure on
$M$ is used to lift $D_\gamma$ to $\mathrm{SL}(2,\mathbb{C})$ when $k=2n-1$. 
The domain of convergence $\{s\in \mathbb{C}\, |\, \mathrm{Re}(s) > 2+\frac{k}{2}\}$ for $R_{\rho^k}(s)$ follows from the fact $\delta(\Gamma)<2$.
From the definition of $D_\gamma$, 
we have the following equalities 
\begin{equation}\label{e:decomp} \begin{split} 
&R_{\rho^{2n}}(s)= \prod_{-n\leq m \leq n} R(\sigma_{2m}, s-m),\\ 
&R_{\rho^{2n-1}}(s)=\prod_{-n\leq m \leq n-1} R(\sigma_{2m+1}, s-m-\frac12), \end{split} 
\end{equation} 
where $R(\sigma_k,s)= \prod_{[\gamma]} ( 1- e^{\frac{k}2i\theta_\gamma} e^{-s\, l_\gamma})$ is defined for
$\mathrm{Re}(s)>2$. 

Now let us introduce the Zograf infinite products for a hyperbolic 3-manifold $M$. These are defined by 
\begin{equation*}
\begin{split}
F_n(M,s) = \prod_{m=n}^\infty&
R(\sigma_{-2m}, s+m) \qquad  \ \ \text{for \ \ $\mathrm{Re} (s) > 2-n$, \quad $n\in\mathbb{N}$},\\
G_n(M,s)= \prod_{m=n}^\infty& R(\sigma_{-(2m+1)}, s+m+\frac12) \qquad \ \ \text{for \ \ $\mathrm{Re} (s) > \frac 32-n$, \quad $n\in\mathbb{N}\cup \{0\}$}.
\end{split}
\end{equation*} 
Note that the definition of $G_n(M,s)$ involves a choice of spin structure on $M$.
Evaluating $F_n(M,s)$, $G_n(M,s)$ at $s=0$, we also define
\begin{equation*} 
\begin{split} 
F_n(M):=F_{n}(M,0)=&
\prod_{m=n}^\infty R(\sigma_{-2m},m)=\prod_{[\gamma]}\prod_{m=n}^\infty  (1 - \q_\gamma^{m}) \qquad \text{for $n\geq 3$},\\ 
G_{n}(M):=G_n(M,0)=& \prod_{m=n}^\infty R(\sigma_{-(2m+1)},m+\frac12)=\prod_{[\gamma]}\prod_{m=n}^\infty  (1 -
\q_\gamma^{m+\frac12}) \qquad \text{for $n\geq 2$}. 
\end{split} 
\end{equation*} 
Here $q_\gamma=\exp(-(l_\gamma+\theta_\gamma))$ where $l_\gamma$ and $\theta_\gamma$ denote the length and the torsion of the prime geodesic determined
by the conjugacy class of $\gamma\in \Gamma$. A function of this type was first introduced by Zograf in \cite{Z}, and played
crucial roles in \cite{MT}, \cite{MP}.

By the same way as the case of $M_0$, one can consider the corresponding objects for $M_u$ for $u$ in a small open neighborhood $V$ of the origin in $\mathscr{D}(M_0)$
since the convergence condition is an open condition for $u$. Hence, for $u\in V$ we define
\begin{equation}\label{e:Def-Zog} 
\begin{split} 
F_{n}(M_u):=&\prod_{[\gamma]}\prod_{m=n}^\infty  (1 -
\q_{\gamma}^{m}) \qquad \text{for $n\geq 3$} ,\\ 
G_{n}(M_u):=&\prod_{[\gamma]}\prod_{m=n}^\infty  (1 - \q_{\gamma}^{m+\frac12})  \qquad \text{for $n\geq 2$}. \end{split} \end{equation} 
Here the first product is taken over the set of
conjugacy classes of the primitive loxodromic elements $\gamma$ defined by $\rho_u$ which are deformations of the loxodromic elements defined by $\rho_0$, and $\q_{\gamma}$ is defined as before using the holonomy representation
$\rho_u$ for $u$ in an open neighborhood $V$ of the origin in $\mathscr{D}(M_0)$. 
Let us remark that there are loxodromic elements defined by $\rho_u$ which are not deformations of loxodromic elements defined by $\rho_0$.
Note that $F_{n}(M_u)$ and $G_{n}(M_u)$ are holomorphic functions over $V$ since we do not take such elements in the definition \eqref{e:Def-Zog}.

\section{Case of compact hyperbolic 3-manifolds}

In this section, we prove an equality involving moduli of Reidemeister torsion, complex volume, and Zograf infinite product for compact hyperbolic 3-manifolds.
This is one of main ingredients in the proof of the same type equality for noncompact hyperbolic 3-manifolds with cusps.
A more refined equality between complex valued invariants was obtained in \cite{Park17} for the case of compact hyperbolic manifolds. 
But, this equality contains some additional terms of modulus 1 which are not easy to treat in the proofs given in following sections. See Remark \ref{r:zero-torsion}.
Hence, we use the following theorem, which holds only with modulus signs, to derive main results of this paper.

\begin{theorem}\label{t:closed-case}
For a closed hyperbolic manifold $M$, the following equalities hold
\begin{align}
\left| \mathcal{T} (M,\rho^{2(n-1)})\right|^{-1} =& \left| \exp \left(\frac{1}{\pi}( n^2-n+\frac16) \mathbb{V}(M)\right) F_n(M) \right| \qquad \text{for}\  \  n\geq 3, \label{e:main-result-odd}\\
\left| \mathcal{T} (M,\rho^{2n-1})\right|^{-1} =& \left| \exp \left(\frac{1}{\pi}( n^2-\frac1{12}) \mathbb{V}(M)\right) G_n(M) \right| \qquad \text{for}\  \  n\geq 2. \label{e:main-result-even}
\end{align}
where $\mathbb{V}(M)= \left(\mathrm{Vol}+i2\pi^2 \mathrm{CS}\right)(M)$. 
\end{theorem}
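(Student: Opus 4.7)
The strategy is to reduce both identities to the evaluation of the Ruelle zeta $R_{\rho^{k}}(s)$ at $s=0$, and then to use the factorization \eqref{e:decomp} together with functional equations of the individual Selberg zeta factors to extract the Zograf product and the complex-volume exponential.

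\emph{Step 1: torsion as Ruelle zeta at zero.} On a closed hyperbolic $3$-manifold the representation $\rho^{k}$ is acyclic for every $k\geq 1$, since $\Gamma=\rho(\pi_{1}(M))$ is Zariski dense in $\mathrm{PSL}(2,\mathbb{C})$ and hence $V_{k+1}$ has no nonzero $\Gamma$-invariants. By Fried's theorem, extended to non-unitary symmetric-power representations of hyperbolic $3$-manifolds in the work of M\"uller and Pfaff, $R_{\rho^{k}}(s)$ admits a meromorphic continuation to $\mathbb{C}$, is regular and nonvanishing at $s=0$, and
\begin{equation*}
|\mathcal{T}(M,\rho^{k})|=|R_{\rho^{k}}(0)|.
\end{equation*}
This reduces \eqref{e:main-result-odd} and \eqref{e:main-result-even} to evaluating $|R_{\rho^{2(n-1)}}(0)|$ and $|R_{\rho^{2n-1}}(0)|$.

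\emph{Step 2: functional equation for each weight.} Each Ruelle factor $R(\sigma_{k},s)$ is a quotient of Selberg zeta functions $Z(\sigma_{k},s)$. The Selberg trace formula on $M$ applied to the locally homogeneous bundle associated with $\sigma_{k}$ yields a functional equation of the schematic form
\begin{equation*}
Z(\sigma_{k},s)=Z(\sigma_{-k},2-s)\,\exp\bigl(\Phi_{k}(s)\,\mathrm{Vol}(M)+i\,\Psi_{k}(s)\,2\pi^{2}\mathrm{CS}(M)\bigr),
\end{equation*}
with polynomials $\Phi_{k},\Psi_{k}$ computed from the Plancherel density on $\mathrm{PSL}(2,\mathbb{C})$ twisted by $\sigma_{k}$. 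Using this to process $R(\sigma_{2m},s-m)|_{s=0}$ converts it, for each $m$, into a convergent Euler product of Zograf type multiplied by an exponential of a quadratic polynomial $c(m)$ against $\mathbb{V}(M)/\pi$.

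\emph{Step 3: assembly and Zograf tail.} Inserting \eqref{e:decomp} and Step 2, the $2n-1$ convergent Euler factors arising from $R(\sigma_{2m},-m)$, $-(n-1)\leq m\leq n-1$, reorganize with the shifted index $m\mapsto m+n$ to fill out precisely
\begin{equation*}
\prod_{[\gamma]}\prod_{m=n}^{\infty}(1-\q_{\gamma}^{m})=F_{n}(M),
\end{equation*}
while the polynomial prefactors sum to $\sum_{m=-(n-1)}^{n-1}c(m)=n^{2}-n+\tfrac{1}{6}$, giving the exponential $\exp\bigl(\tfrac{1}{\pi}(n^{2}-n+\tfrac{1}{6})\mathbb{V}(M)\bigr)$. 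Taking moduli and inverting yields \eqref{e:main-result-odd}. The even identity \eqref{e:main-result-even} is entirely analogous from the second factorization of \eqref{e:decomp}: the half-integer shift of $s$ replaces $F_{n}(M)$ by $G_{n}(M)$ and produces the coefficient $n^{2}-\tfrac{1}{12}$.

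\emph{Main obstacle.} The delicate point is to identify $\Phi_{k},\Psi_{k}$ precisely enough that the quadratic coefficient $c(m)$ extracted at weight $2m$ sums to exactly $n^{2}-n+\tfrac{1}{6}$ (respectively $n^{2}-\tfrac{1}{12}$), a computation controlled by the Plancherel density on $\mathrm{PSL}(2,\mathbb{C})$ twisted by $\sigma_{2m}$ and carried out by contour shift in the Mellin representation of $\log Z$. Working with moduli is what makes the argument self-contained: phase factors of modulus $1$ arising from the Chern--Simons part of the functional equation and from the choice of spin lift drop out, sparing us the more subtle bookkeeping of the complex identities of \cite{Park17}.
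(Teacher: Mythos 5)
Your outline follows the same route as the paper's proof: reduce the torsion to $|R_{\rho^k}(0)|$, factor the Ruelle zeta via \eqref{e:decomp}, apply a functional equation to the positive-weight factors, and reassemble into a Zograf tail times a volume exponential. But two concrete points break your version. First, the torsion--zeta relation in Step 1 is off by a square. Wotzke's theorem gives $|R_{\rho^k}(0)|=T(M,\rho^k)$ for the analytic torsion, and Bismut--Zhang/M\"uller give $T(M,\rho^k)=|\mathcal{T}(M,\rho^k)|^2$; equivalently, Fried's theorem in any normalization reads $|R_{\rho^k}(0)|=|\mathcal{T}(M,\rho^k)|^{2}$, not $|\mathcal{T}(M,\rho^k)|$. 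With your relation every exponent doubles and $F_n(M)$, $G_n(M)$ appear squared, so the constants $n^2-n+\tfrac16$ and $n^2-\tfrac1{12}$ cannot emerge; the square root taken at the very end is essential. (A smaller point: acyclicity of $\rho^k$ for $k\geq 1$ is true, but Zariski density only kills $H^0$ and $H^3$, not $H^1$ and $H^2$; one needs Raghunathan-type vanishing.)

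Second, Step 3 asserts the conclusion rather than deriving it. After applying the functional equation $|R(\sigma_k,s)|=|\exp(\tfrac{4}{\pi}\mathrm{Vol}(M)\,s)\,R(\sigma_{-k},-s)|$ to the weights $2m$ with $m>0$, the finite product one actually obtains is $\prod_{m=1}^{n-1}|R(\sigma_{-2m},m)|^{2}=|F_1(M,0)|^{2}\,|F_n(M,0)|^{-2}$: the Zograf tail enters as the \emph{denominator} of a quotient against $F_1$, not by an index shift that ``fills out'' the product from $m=n$ onward. What then remains is the weight-zero factor, and the combination $\bigl(|R(\sigma_0,s)|\,|F_1(M,s)|^{2}\bigr)_{s=0}$ is genuinely delicate: $R(\sigma_0,s)$ is the Ruelle zeta of the trivial representation, which is singular at $s=0$ (the trivial representation is not acyclic on a closed $3$-manifold), and $F_1(M,s)$ converges only for $\mathrm{Re}(s)>1$, so the evaluation at $s=0$ requires analytic continuation. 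Its value $\exp(-\tfrac{1}{3\pi}\mathrm{Vol}(M))$ is exactly where the $\tfrac16$ (resp.\ $-\tfrac1{12}$) comes from, and the paper imports it from Theorem 3.8 of \cite{Park17}. Your sketch neither identifies this input nor supplies the Plancherel/contour computation that would replace it; the ``main obstacle'' you name at the end is the entire analytic content of the theorem and is left unresolved.
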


Note that the homology groups $H_*(M,\rho^k)$ vanish under conditions in Theorem \ref{t:closed-case}.
 
\begin{proof}
The proof is simpler than the one given in \cite{Park17} since we do not care of phase parts.

First, we denote by $E(\rho^k)$ the flat vector bundle over $M$ defined by $\rho^k$. Then there exists a canonical Hermitian metric over each fiber of $E(\rho^k)$
constructed in \cite{MM}, which is called as \emph{admissible metric}. Using the hyperbolic metric over $M$ and this admissible metric for $E(\rho^k)$, one can define the Laplaicans $\Delta_p$ 
acting on $\Omega^p(M, E(\rho^k))$ for $p=0,1,2,3$,
which are selfadjoint nonnegative operators. Then, in \cite{Wot} the following equality was proved
\begin{equation}\label{e:pf-real1}
|R_{\rho^k}(0)|= T(M,\rho^k) 
\end{equation}
where $T(M,\rho^k)$ denotes the analytic torsion defined by the Laplacians $\Delta_p$ acting on $\Omega^p(M, E(\rho^k))$ for $p=0,1,2,3$.
By \cite{BZ},\cite{Mu}, we also have
\begin{equation}\label{e:pf-real2}
T(M, \rho^k)= |\mathcal{T}(M,\rho^k)|^2. 
\end{equation}
By \eqref{e:decomp} and the
following function equation of $R(\sigma_k,s)$ proved in the chapter 4 of \cite{BO}, 
\begin{equation}\label{e:pf-real3} 
|R(\sigma_k,s)| =\left|\exp\left(\frac{4}{\pi} \mathrm{Vol}(M) s\right) \, R(\sigma_{-k},-s)\right|, 
\end{equation} 
we have
\begin{equation}\label{e:pf-real4} 
\begin{split} 
&|R_{\rho^{2(n-1)}}(0)|=\Big(\prod_{-(n-1)\leq m \leq n-1} |R(\sigma_{2m}, s-m)|\Big)_{s=0} \\
=& \exp\left(- \frac{2n(n-1)}{\pi}\mathrm{Vol}(M)\right) \, \left(|R(\sigma_0,s)| \prod_{1\leq m
\leq n-1} | R(\sigma_{-2m},s+m)|^2\right)_{s=0}. 
\end{split} \end{equation} 
For $n\geq 3$, the last terms on the right hand side of \eqref{e:pf-real4} can be written as follows, 
\begin{equation}\label{e:pf-real5} \begin{split} &
\Big(|R(\sigma_0,s)| \prod_{1\leq m \leq n-1} | R(\sigma_{-2m},s+m)|^2\Big)_{s=0}\\
 =& \Big( |R(\sigma_0,s)|\, |F_1(M,s)|^2 \Big)_{s=0}\, \prod_{m=n}^\infty|R(\sigma_{-2m},m)|^{-2}\\
=& \exp\left(-\frac{1}{3\pi} \mathrm{Vol}(M)\right)\,  \prod_{m=n}^\infty|R(\sigma_{-2m},m)|^{-2}. \end{split} \end{equation}
The second equality above follows from Theorem 3.8 in \cite{Park17} recalling that $R(\sigma_0,s)=R_{\rho^0}(s)$.   By \eqref{e:pf-real1},
\eqref{e:pf-real2} and \eqref{e:pf-real5}, 
\begin{equation}\label{e:pf-real6} 
|\mathcal{T}(M,\rho^{2(n-1)})|^{-1} = \exp\left(  \frac{1}{\pi}(n^2-n+\frac16)\, \mathrm{Vol}(M) \right)\,  \prod_{m=n}^\infty|R(\sigma_{-2m},m)|. \end{equation}
This completes the proof for the case of $\rho^{2(n-1)}$. The case for $\rho^{2n-1}$ can be proved in the same way.
\end{proof}

\begin{remark}\label{r:zero-torsion}
Comparing  equalities \eqref{e:main-result-odd}, \eqref{e:main-result-even} with Theorems 1.1 and 5.1 in \cite{Park17}, one can conclude that
the torsions defined by the chain complex of the zero generalized eigenspaces, 
which are denoted by $\mathcal{T}_0(\mathcal{M}_\Gamma, \rho_{k})$ in \cite{Park17}, are actually of modulus 1.
Hence, one may wonder whether actually this would be equal to $1$, but it seems to be difficult to prove it using the method in \cite{Park17}.
\end{remark}

\section{Case of hyperbolic 3-manifolds with cusps and $\rho^{2(n-1)}$}\label{s:pf-main-thm}

For a given hyperbolic 3-manifold with cusps $M_0$, we take a sequence $\{M_{p,q}\}$ of infinitely many compact hyperbolic 3-manifolds, which are obtained by
$(p_i,q_i)$-hyperbolic Dehn surgery to each end of $M_u$ for points $u$ near the origin in $\mathscr{D}(M_0)$ satisfying \eqref{e:prime}. 

Now, for a closed hyperbolic manifold $M_{p,q}$, by Theorem \ref{t:closed-case}, we have
\begin{equation}\label{e:pf-odd-1}
\left| \mathcal{T} (M_{p,q},\rho^{2(n-1)})\right|^{-1} = \left| \exp \left(\frac{1}{\pi}( n^2-n+\frac16) \mathbb{V}(M_{p,q})\right) F_n(M_{p,q}) \right| .
\end{equation}
Here $\rho^{2(n-1)}$ is the representation of $\pi_1(M_{p,q})$ to $\mathrm{Sym}^{2(n-1)} V_2$, which is defined 
by the same way as  $\rho^{2(n-1)}_u$ for the corresponding point $u\in \mathscr{D}(M_0)$.

By the Mayer-Vietoris argument for the Reidemeister torsion as in Lemma 3.12 of \cite{MFP},
or in the section 3 of \cite{P97}, we have 
\begin{equation}\label{e:pf-odd-2} 
\mathcal{T}(M_{p,q}, \rho^{2(n-1)})=\mathcal{T}(M_u,\rho^{2(n-1)}, \{p_i m_i + q_il_i\}) \prod_{i=1}^h \prod_{j=1}^{n-1} (\q_{\gamma_i}^m -1)(\q_{\gamma_i}^{-m}-1)
\end{equation} 
where $\q_{\gamma_i}=\exp(-(l_{\gamma_i}+i\theta_{\gamma_i}))$. Here $\gamma_i$, $i=1,\ldots, h$ 
denotes the primitive loxodromic element corresponding to the added closed geodesic $\mathbf{g}_i$ to the $i$-th end of
$M_u$. Let us remark the equality \eqref{e:pf-odd-2} holds up to sign since the definition of Reidemeister torsion has $\pm1$ ambiguity.  
The same remark also holds for equalities for Reidemeister torsion in the following parts of proof.

For the complex volume of $M_{p,q}$, by Theorem \ref{t:Yoshida}, we have
\begin{equation}\label{e:pf-odd-3}
\exp\left( \frac{2}{\pi} \mathbb{V}(M_{p,q})\right) = \exp\left( \frac{2}{\pi} \mathbb{V}(M_u)\right) 
\prod^h_{i=1} q_{\gamma_i}.
\end{equation}

For the Zograf infinite product $F_n(M_{p,q})$, we separate the terms of $\gamma_i$ for $i=1,\ldots,h$ from other terms by
\begin{equation}\label{e:pf-odd-4}
F_n(M_{p,q}) = \prod^h_{i=1}\prod^{\infty}_{m=n} (1- q_{\gamma_i}^m)^2  \prod_{[\gamma]\neq [\gamma_i]} \prod^\infty_{m=n} (1-q_\gamma^m). 
\end{equation}
Here note that the primitive conjugacy classes corresponding $\gamma_i$ and $\gamma_i^{-1}$ contribute by the same factor so that we get  $(1- q_{\gamma_i}^m)^2$.
By Theorem 6.5 in \cite{MFP}, we have
\begin{equation}\label{e:pf-odd-conv}
\lim_{u\to 0} \left( \prod_{[\gamma]\neq [\gamma_i]} \prod^\infty_{m=n} (1-q_\gamma^m) \right) = F_n(M_0)
\end{equation}
where $u\to 0$ means that $(p_i,q_i)$ changes as $u=(u_1,\ldots, u_h)$ goes to the origin of $\mathscr{D}(M_0)$ satisfying the condition \eqref{e:prime}.
From now on, the limit as $u\to 0$ should be understood in this sense.

Combining the equalities \eqref{e:pf-odd-1}, \eqref{e:pf-odd-2}, \eqref{e:pf-odd-3}, and \eqref{e:pf-odd-4}, we have
\begin{equation}\label{e:pf-odd-5}
\begin{split}
&\left| \mathcal{T} (M_{u},\rho^{2(n-1)}_u, \{p_i m_i + q_il_i\})\, \prod_{i=1}^h q_{\gamma_i}^{\frac1{12}} \prod_{m=1}^{\infty} (1-\q_{\gamma_i}^m)^2\right|^{-1}\\
=&\, \left| \exp \left(\frac{1}{\pi}( n^2-n+\frac16) \mathbb{V}(M_u)\right) \left(F_n(M_0)+\varepsilon_1(u)\right) \right| 
\end{split}
\end{equation}
where $\varepsilon_1(u)\in\mathbb{C}$  such that $\lim_{u\to 0}\varepsilon_1(u)=0$.

Let us recall the following equality given in the section 4 of \cite{NZ}, 
\begin{equation*} l_{\gamma_i}+i \theta_{\gamma_i} = - (r_i u_i+s_iv_i) \qquad \ (\mathrm{mod}\ 2\pi i) \end{equation*} where $r_i$, $s_i$ are integers such
that $p_is_i-q_ir_i=1$. By this and \eqref{e:prime}, 
\begin{equation}\label{e:pm8} \tilde{\tau}_i(u):= \frac{r_i+s_i \tau_i(u)}{p_i+q_i\tau_i(u)} =-\frac{1}{2\pi i} (l_{\gamma_i}+i \theta_{\gamma_i}) \qquad \ (\mathrm{mod}\
\mathbb{Z}) \end{equation} where $\tau_i(u)$ is given in Theorem \ref{t:Thurston}. 
Let us remark that 
\begin{equation}\label{e:action} \text{ the action of $\big(\begin{smallmatrix} a & b \\ c & d\end{smallmatrix}\big)$ on $\big(\begin{smallmatrix} m_i\\l_i\end{smallmatrix}\big)$ induces the action of 
$\big(\begin{smallmatrix} d & c \\ b & a \end{smallmatrix}\big)$ on $\big(\begin{smallmatrix}\tau\\1\end{smallmatrix}\big)$} \end{equation} 
as we obtained in \eqref{e:pm8}. 
Note that there exists an open neighborhood $V$ of the origin of $\mathscr{D}(M_0)$ such that $\tilde{\tau}_i(u)$ lies in the upper half plane for $u\in V$. 
Hence we can consider the Dedekind eta function of
$\tilde{\tau}_i(u)$ for $u\in V$. 
Recall that the Dedekind eta function $\eta(\tau)$ for $\tau$ in the upper half plane is defined by 
\begin{equation*} \eta(\tau)= e^{\frac{2\pi i \tau}{24}} \prod_{m=1}^\infty \big( 1- \exp(2 \pi
im\tau)\big), \end{equation*} which satisfies the following transformation law, 
\begin{equation}\label{e:pf-odd-7} 
\log \eta \big(\frac{d\tau+c}{b\tau+a}\big)=\log \eta(\tau) +\frac{1}{4} \log(-(a+b\tau)^2) +\frac{1}{12} \, \pi i
I \end{equation} where $I$ is an integer depending on $\big(\begin{smallmatrix} d & c \\ b & a \end{smallmatrix}\big)$.

By the definition of the Reidemeister torsion, we have the following equality
\begin{equation}\label{e:pf-odd-6}
\mathcal{T}(M_u,\rho^{2(n-1)}_u, \{p_i m_i + q_il_i\})=\mathcal{T}(M_u,\rho^{2(n-1)}_u, \{m_i\})\, A_{2(n-1)}(u)^{-1}, 
\end{equation} 
where $A_{2(n-1)}(u)$ denotes the determinant of the basis changing matrix from the one determined by $\{m_i\}$ to the one determined by $\{p_im_i+q_il_i\}$ as explained in the subsection \ref{ss:spin}. 
Hence, using \eqref{e:pf-odd-6} the equality \eqref{e:pf-odd-5} can be re-written in terms of the Dedekind eta function $\eta(\tilde{\tau}_i(u))$ as follows,
\begin{equation}\label{e:pf-odd-8} 
\begin{split} 
&\left| \mathcal{T}(M_u,\rho^{2(n-1)}_u, \{m_i\}) \, A_{2(n-1)}(u)^{-1} \, \prod_{i=1}^h \eta(\tilde{\tau}_i(u))^2 \right|^{-1} \\ 
&\qquad \quad =\left| \exp\left(\frac{1}{\pi} (n^2-n+\frac16) \, \mathbb{V}(M_u) \right)\, \left(F_{n}(M_0)+\varepsilon_1(u)\right)\right| .
\end{split} 
\end{equation} 
As in the proof of Lemma 5.13 of \cite{MFP}, one can check that
\begin{equation}\label{e:pf-odd-A}
\lim_{u\to 0} \left( A_{2(n-1)}(u)^{-1} \, \prod^h_{i=1} (p_i+q_i\tau_i(u)) \right)=1.
\end{equation}
This and the equality \eqref{e:pf-odd-7} imply
\begin{equation}\label{e:est-quot}
 \lim_{u\to 0} \left( A_{2(n-1)}(u)^{-1}\, \prod^h_{i=1} \eta(\tilde{\tau}_i(u))^2 \right) =  \prod^h_{i=1} \eta(\tau_i(0))^2.
\end{equation}
Hence, we have
\begin{equation}\label{e:pf-odd-10} 
\begin{split} 
&\left| \mathcal{T}(M_u,\rho^{2(n-1)}_u, \{m_i\}) \, \left(\prod_{i=1}^h \eta({\tau}_i(0))^2+\varepsilon_2(u) \right)  \right|^{-1} \\ 
&\qquad \quad =\left| \exp\left(\frac{1}{\pi} (n^2-n+\frac16) \, \mathbb{V}(M_u) \right)\, \left(F_{n}(M_0)+\varepsilon_1(u)\right)\right| .
\end{split} 
\end{equation} 
where $\varepsilon_2(u)\in\mathbb{C}$  such that $\lim_{u\to 0}\varepsilon_2(u)=0$.
Taking $u\to 0$ along the discrete set corresponding to the sequence $\{M_{p,q}\}$, we obtain the corresponding equality for $M_0$. This completes the proof of Theorem \ref{t:main theorem}.

\section{Case of hyperbolic 3-manifolds with cusps and $\rho^{2n-1}$}\label{s:even}

In this section we prove

\begin{theorem}\label{t:theorem-even} Let $M_0$ be a complete hyperbolic $3$-manifold of finite volume with $h$ cusps. For an acyclic spin structure on $M_0$,  
the following equality holds for $n\geq 2$, 
\begin{equation}\label{e:thm-even} 
\left| \mathcal{T} (M_0,\rho^{2n-1})\, \prod_{i=1}^h \left(\, \theta_{01}(0,\tau_i) \eta({\tau}_i)^{-1}\, \right) \right|^{-1} \\
=\left| \exp\left( \frac{1}{\pi} (n^2-\frac{1}{12}) \mathbb{V}(M_0) \right)\, G_{n}(M_0) \right|. 
\end{equation} 
Here $\theta_{01}(z,\tau)$ is a theta function defined by 
\begin{equation} \theta_{01}(z,\tau)=\sum_{n\in\mathbb{Z}} \exp\big(\pi i n^2\tau +2\pi in (z+\frac12)\big)
\end{equation}
 for $z\in\mathbb{C}$ and $\tau$ in the upper half plane.
\end{theorem}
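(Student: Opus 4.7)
The plan is to parallel the proof of Theorem \ref{t:main theorem} in Section \ref{s:pf-main-thm} by applying the closed-manifold identity \eqref{e:main-result-even} to a sequence $\{M_{p,q}\}$ of closed spin hyperbolic $3$-manifolds approximating $M_0$. Since the spin structure on $M_0$ is assumed acyclic, Corollary 5.3 of \cite{MFP} guarantees that it is compactly approximable, so we may choose $(p_i,q_i)$ satisfying both \eqref{e:prime} and the spin condition \eqref{e:spin-cond} along an infinite sequence for which $u\to 0$ in $\mathscr{D}(M_0)$.

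The main algebraic input is a Mayer--Vietoris identity for Reidemeister torsion as in Lemma 3.12 of \cite{MFP}. Since every weight of $\rho^{2n-1}$ is a nonzero half-integer, the representation remains acyclic on each solid-torus Dehn filling and no basis contribution appears, yielding
\begin{equation*}
\mathcal{T}(M_{p,q},\rho^{2n-1}) = \mathcal{T}(M_u,\rho^{2n-1}) \prod_{i=1}^h \prod_{j=1}^n (\q_{\gamma_i}^{j-\frac12} - 1)(\q_{\gamma_i}^{-(j-\frac12)} - 1).
\end{equation*}
I would combine this with Yoshida's identity $\exp(\mathbb{V}(M_{p,q})/\pi) = \exp(\mathbb{V}(M_u)/\pi)\prod_i \q_{\gamma_i}^{1/2}$ from Theorem \ref{t:Yoshida} and with the decomposition
\begin{equation*}
G_n(M_{p,q}) = \Big(\prod_{[\gamma]\neq [\gamma_i]}\prod_{m=n}^\infty (1-\q_\gamma^{m+\frac12})\Big)\,\prod_{i=1}^h\prod_{m=n}^\infty (1-\q_{\gamma_i}^{m+\frac12})^2,
\end{equation*}
whose first factor converges to $G_n(M_0)$ as $u\to 0$ by the same argument as \eqref{e:pf-odd-conv}. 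Substituting into \eqref{e:main-result-even} for $M_{p,q}$ and using $(q^a-1)(q^{-a}-1) = -q^{-a}(1-q^a)^2$, the powers of $|\q_{\gamma_i}|$ coming from Yoshida's formula ($(12n^2-1)/24$) and from the Mayer--Vietoris factors ($-12n^2/24$, since $\sum_{j=1}^n(j-\tfrac12)=n^2/2$) combine to $-1/24$, while the finite product $\prod_{j=1}^n|1-\q_{\gamma_i}^{j-1/2}|^2$ merges with the Zograf tail into $\prod_{m\geq 0}|1-\q_{\gamma_i}^{m+1/2}|^2$. The Jacobi triple product then identifies the resulting factor with $|\theta_{01}(0,\tilde{\tau}_i(u))\,\eta(\tilde{\tau}_i(u))^{-1}|$, where $\tilde{\tau}_i(u)$ is given by \eqref{e:pm8}.

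The remaining step is to replace $\tilde{\tau}_i(u)$ by $\tau_i(u)$. Both $\eta$ and $\theta_{01}$ are modular of weight $1/2$, so the ratio $\theta_{01}/\eta$ has weight zero and the $|p_i+q_i\tau_i|^{1/2}$ factors from the transformation law cancel cleanly in absolute value, as in the analogue \eqref{e:pf-odd-A}--\eqref{e:est-quot} for the even case. Taking $u\to 0$ along the chosen Dehn surgery sequence and invoking the continuity of $\mathcal{T}(M_u,\rho^{2n-1})$, $\mathbb{V}(M_u)$ and $\tau_i(u)$ then yields \eqref{e:thm-even}. The main obstacle will be the last modular-transformation step: tracking the image of the theta characteristic $\bigl[\begin{smallmatrix}0\\1/2\end{smallmatrix}\bigr]$ under the matrix $\bigl(\begin{smallmatrix}p_i&q_i\\r_i&s_i\end{smallmatrix}\bigr)\in SL_2(\mathbb{Z})$ acting via \eqref{e:action}, and verifying that the spin compatibility \eqref{e:spin-cond} is exactly the parity condition on this matrix that preserves $\theta_{01}$ rather than sending it to $\theta_{00}$ or $\theta_{10}$. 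A secondary technical point is the careful derivation of the Mayer--Vietoris identity with the correct half-integer weight factors and sign conventions in the spin-dependent case.
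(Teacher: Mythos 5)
Your proposal follows essentially the same route as the paper: approximate by the Dehn fillings $M_{p,q}$ compatible with the acyclic spin structure, apply \eqref{e:main-result-even}, use the Mayer--Vietoris factorization and Yoshida's formula, and reassemble the half-integer factors into $\theta_{01}(0,\tilde\tau_i)\eta(\tilde\tau_i)^{-1}$ via the product expansion; your power counting ($(12n^2-1)/24 - n^2/2 = -1/24$) agrees with the paper's \eqref{e:pf-even-4}. The one step you flag as the ``main obstacle''---controlling the theta characteristic under $\bigl(\begin{smallmatrix} s_i & r_i \\ q_i & p_i\end{smallmatrix}\bigr)$---is exactly where the paper does its only extra work: after adjusting the basis so that $\varepsilon_{m_i}=-1$, it restricts to surgery coefficients with $p_i\equiv 1$, $q_i\equiv 0 \pmod 4$ (assumption \eqref{e:assump}), which forces $\gamma_i\in\Gamma(4)$; there $\theta_{01}(0,\cdot)^2$ transforms with the same automorphy factor $(b\tau+a)$ as $\eta^2$, so $|\theta_{01}/\eta|$ is invariant and $\tilde\tau_i(u)$ may be replaced by $\tau_i(u)$ as in \eqref{e:pf-even-6}. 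Your guess that \eqref{e:spin-cond} alone is the right parity condition is not quite how the paper proceeds---it needs the stronger mod-$4$ normalization, achievable because infinitely many admissible $(p_i,q_i)$ remain after imposing it---but with that adjustment your argument closes.
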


\begin{proof}
Although we prove this theorem essentially in the same way as the proof of Theorem \ref{t:main theorem}, we need to explain how the acyclic spin structure  is involved in the following proof.
If the given spin structure on $M_0$ is  acyclic, then it is compactly approximable as explained in the subsection \ref{ss:spin}. Then, for a basis $(m_i,l_i)$ of $H_1(T_i,\mathbb{Z})$
for $i=1,\ldots,h$, there are infinitely many points near the origin of $\mathscr{D}(M_0)$ with coprime pairs
$(p_i,q_i)$ $i=1,\ldots,h$ satisfying \eqref{e:prime} and \eqref{e:spin-cond}. In particular, if needed properly changing the basis $(m_i,l_i)$ to have $\varepsilon_{m_i}=-1$ for $i=1,\ldots,h$, we may assume that 
\begin{equation}\label{e:assump}
p_i=4k_i+1, \quad  q_i=4l_i \qquad \text{for} \quad k_i,l_i\in \mathbb{Z}.
\end{equation}
This assumption will play a crucial role later.  
For the closed hyperbolic manifold $M_{p,q}$ with the induced spin structure, by Theorem \ref{t:closed-case} we have
\begin{equation}\label{e:pf-even-1}
\left| \mathcal{T} (M_{p,q},\rho^{2n-1})\right|^{-1} = \left| \exp \left(\frac{1}{\pi}( n^2-\frac1{12}) \mathbb{V}(M_{p,q})\right) G_n(M_{p,q}) \right| .
\end{equation}

By the Mayer-Vietoris argument for the Reidemeister torsion as in Lemma 3.7 of \cite{MFP},
we have 
\begin{equation}\label{e:pf-even-2} 
\mathcal{T}(M_{p,q}, \rho^{2n-1})= \mathcal{T} (M_u,\rho^{2n-1}) \prod_{i=1}^h \prod_{m=0}^{n-1} (\q_{\gamma_i}^{m+\frac12} -1)(\q_{\gamma_i}^{-(m+\frac12)}-1) \end{equation} where
$\q_{\gamma_i}^{j+\frac12}=\exp(-(j+\frac12)(l_{\gamma_i}+i\theta_{\gamma_i}))$ is defined with respect to the induced spin structure on $M_{p,q}$.

For the Zograf infinite product $G_n(M_{p,q})$, by Theorem 6.5 in \cite{MFP},
\begin{equation}\label{e:pf-even-3}
G_n(M_{p,q}) = \prod^h_{i=1}\prod^{\infty}_{m=n} (1- q_{\gamma_i}^{m+\frac12})^2 \left(G_n(M_0)+ \varepsilon_3(u)\right) 
\end{equation}
where $\varepsilon_3(u)\in\mathbb{C}$ such that $\lim_{u\to 0}\varepsilon_3(u)=0$.
Here note that the primitive conjugacy classes corresponding $\gamma_i$ and $\gamma_i^{-1}$ contribute by the same factor so that we get  $(1- q_{\gamma_i}^{m+\frac12})^2$.

Combining equalities \eqref{e:pf-odd-3}, \eqref{e:pf-even-1}, \eqref{e:pf-even-2}, and \eqref{e:pf-even-3},
\begin{equation}\label{e:pf-even-4}
\begin{split}
&\left| \mathcal{T}(M_u,\rho^{2n-1}_u) \prod^h_{i=1} q_{\gamma_i}^{-\frac{1}{24}} \prod^{\infty}_{m=0} (1- q_{\gamma_i}^{m+\frac12})^2 \right|^{-1}\\
=& \left|  \exp\left(\frac{1}{\pi} (n^2-\frac{1}{12}) \, \mathbb{V}(M_u) \right)\, \left(G_n(M_0)+ \varepsilon_3(u)\right) \right|.
\end{split}
\end{equation}

From the formula given at p. 69 in \cite{Mumford}, let us recall that the theta function $\theta_{01}(z,\tau)$ has a product expression at $z=0$, 
\begin{equation} \theta_{01}(0,\tau)= \prod_{m=1}^\infty \big( 1- \exp(2\pi i m\tau) \big)\, \prod_{m=0}^\infty \big(1- \exp(\pi i
(2m+1)\tau)\big)^2. \end{equation} 
By Proposition 9.2 in \cite{Mumford}, it also satisfies the transformation law 
\begin{equation}\label{e:pf-even-5} 
\theta_{01}\big(0, \gamma\tau)^2 =  (b\tau+a) \theta_{01} (0,\tau)^2
\end{equation} 
for $\gamma\in \Gamma(4)$. Here $\Gamma(4)\subset \mathrm{SL}(2,\mathbb{Z})$ denotes the principal congruence group of level $4$, and
the action $\gamma\tau$ is
given by $\frac{d\tau+c}{b\tau+a}$ for an element 
$\gamma=\big(\begin{smallmatrix} d & c \\ b & a \end{smallmatrix}\big)$ in $\Gamma(4)$ recalling \eqref{e:action}. By the assumption given in \eqref{e:assump}, we
have
\begin{equation}
\gamma_i=\begin{pmatrix} s_i & r_i \\ q_i & p_i \end{pmatrix} \in \Gamma(4). 
\end{equation}
Hence, for these $\gamma_i$, $i=1,\ldots, h$,  we have the following equality, 
\begin{equation}\label{e:pf-even-6} 
\begin{split} 
&|\q_{\gamma_i}|
\prod_{m=0}^\infty |(1- \q_{\gamma_i}^{m+\frac12})|^{-48}\\ 
=& |\exp (2 \pi i \tilde{\tau}_i(u))|\, \prod_{m=0}^\infty |(1- \exp(\pi i (2m+1)\tilde{\tau}_i(u))|^{-48}\\ =&| \eta(\tilde{\tau}_i(u))|^{24}\, |
\theta_{01}(0,\tilde{\tau}_i(u))|^{-24} = |\eta({\tau}_i(u))|^{24}\, |\theta_{01}(0,{\tau}_i(u))|^{-24}. 
\end{split} 
\end{equation} 
By \eqref{e:pf-even-4} and \eqref{e:pf-even-6}, we have 
\begin{equation}\label{e:pf-even-7} 
\begin{split}
&\left|\mathcal{T}(M_u,\rho^{2n-1}_u) \,\prod_{i=1}^h \left(\,  \theta_{01}(0,{\tau}_i(u))\, \eta({\tau}_i(u))^{-1}\, \right)\right|^{-1}\\ 
=&\left| \exp\left(\frac{1}{\pi} (n^2-\frac{1}{12}) \, \mathbb{V}(M_u) \right)\, (G_{n}(M_0)+\varepsilon_3(u) ) \right|. 
\end{split} 
\end{equation} 
Taking $u\to 0$ along the discrete set corresponding to the sequence $\{M_{p,q}\}$, we obtain the corresponding equality for $M_0$ with acyclic spin structure. This completes the proof for
the case of hyperbolic 3-manifolds with cusps and $\rho^{2n-1}$.
\end{proof}

By the same reasoning as before, Theorem \ref{t:theorem-even} leads the author to make the following conjecture:

\begin{conj}
 There exists an open neighborhood $V$ of the origin in $\mathscr{D}(M_0)$ where the following equality holds for $n\geq 2$,
\begin{equation*}\label{e:conjecture-even} 
\begin{split}
&\mathcal{T}(M_u,\rho_u^{2n-1} )^{-12} \, \prod_{i=1}^h  \left(\, \theta_{01}(0,\tau_i(u)) \eta({\tau}_i(u))^{-1}\, \right)^{-12}\\
=&\ c_{M_0,n} \exp\left( \frac{1}{\pi} (12n^2-1) \mathbb{V}(M_u) \right)\, G_{n}(M_u)^{12} 
\end{split}
 \end{equation*}
where $c_{M_0,n}$ is a constant depending only on $M_0$ and $n$ with $|c_{M_0,n}|=1$.
\end{conj}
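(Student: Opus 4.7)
The plan is to upgrade Theorem \ref{t:theorem-even} (a modulus identity at $u=0$) to the conjectured full complex identity on a neighborhood of the origin in $\mathscr{D}(M_0)$. The strategy rests on the observation that both sides of the conjecture, when raised to the twelfth power, are single-valued holomorphic functions on such a neighborhood $V$; if one can establish the corresponding modulus identity on all of $V$, then the ratio of the two sides is a holomorphic function with constant modulus, hence a constant $c_{M_0,n}$ of modulus one by the open mapping theorem.

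\medskip

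First I would verify holomorphicity. On the left, acyclicity is an open condition by upper semicontinuity of $\dim H_*(M_u,\rho_u^{2n-1})$ (as in Section \ref{ss:spin}), so $\mathcal{T}(M_u,\rho_u^{2n-1})$ is well-defined modulo $\pm 1$ on $V$, and the twelfth power resolves the sign ambiguity. The functions $\tau_i(u)$ are holomorphic on $V$ by Theorem \ref{t:Thurston}, hence so are $\theta_{01}(0,\tau_i(u))$ and $\eta(\tau_i(u))$. On the right, $\mathbb{V}(M_u)$ is holomorphic by Theorem \ref{t:Yoshida} and $G_n(M_u)^{12}$ is holomorphic by the discussion following \eqref{e:Def-Zog}; the twelfth power kills the modulo-$2\pi i$ ambiguity of $\mathbb{V}$.

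\medskip

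Second, to establish the modulus identity on $V$, I would re-examine the proof of Theorem \ref{t:theorem-even} with $u$ treated as a free parameter: for each Dehn filling $(p,q)$ satisfying \eqref{e:spin-cond} and \eqref{e:assump}, Theorem \ref{t:closed-case} for $M_{p,q}$ combined with \eqref{e:pf-even-2}, \eqref{e:pf-odd-3}, and the theta/eta identity \eqref{e:pf-even-6} delivers the modulus identity at the specific point of $V$ determined by $(p,q)$ via \eqref{e:prime}. However, this only gives the identity on a discrete subset of $V$ accumulating at $0$; an identity between continuous functions holding along a sequence converging to a single point does not propagate to an open neighborhood merely via real-analyticity. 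Bridging this gap from a discrete accumulating subset to an open subset of $V$ is the principal obstacle.

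\medskip

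Two natural routes may close this gap. The first is to establish the family version of the analytic-torsion identity $|R_{\rho^k}(0)| = |\mathcal{T}(M_u,\rho^k)|^2$ from \cite{Wot}, \cite{BZ}, \cite{Mu} directly for the (in)complete manifolds $M_u$ with $u \in V$, using a suitably regularized analytic torsion for the horospherical ends; the $\eta$ and $\theta_{01}$ factors should emerge as the regularizing cusp contributions. The second is to invoke the refined complex-valued identity of \cite{Park17} for the closed manifolds $M_{p,q}$, which contains an auxiliary factor $\mathcal{T}_0$ of modulus one (Remark \ref{r:zero-torsion}), and then to control the Dehn-surgery limit of $\mathcal{T}_0(M_{p,q},\rho^{2n-1})$ as $(p,q)\to\infty$, producing the undetermined constant $c_{M_0,n}$. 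Either route requires analytic input beyond the modulus-level techniques of this paper, which is precisely why the statement is conjectural.
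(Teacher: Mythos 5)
The statement you were asked to prove is presented in the paper as a \emph{conjecture}: the paper offers no proof of it, only the remark that Theorem \ref{t:theorem-even} ``leads the author to make'' it. Your proposal is therefore not being measured against a proof in the paper, and indeed your proposal is not a proof either --- you say so yourself in the final paragraph. Your diagnosis of why the statement resists the paper's methods is essentially correct: the Dehn-surgery argument of Section \ref{s:even} only yields a modulus identity in the limit $u\to 0$ along a discrete set, i.e.\ an identity at the single point $u=0$, and an equality of continuous (or even real-analytic) functions at one point, or on a discrete set accumulating there, does not propagate to an open neighborhood. Your framing of the would-be endgame is also the right one: if the modulus identity held on all of a connected open $V$, the ratio of the two holomorphic twelfth-power expressions would be a holomorphic function of constant modulus, hence the constant $c_{M_0,n}$ with $|c_{M_0,n}|=1$.

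One inaccuracy worth flagging: you assert that the surgery argument ``delivers the modulus identity at the specific point of $V$ determined by $(p,q)$.'' It does not quite. At those discrete points the right-hand side of \eqref{e:pf-even-7} involves $G_n(M_0)+\varepsilon_3(u)$, not $G_n(M_u)$; the error term $\varepsilon_3(u)$ measures the discrepancy between the non-core loxodromic classes of $M_{p,q}$ and the classes entering the definition \eqref{e:Def-Zog} of $G_n(M_0)$, and it is controlled only in the limit $u\to 0$ (Theorem 6.5 of \cite{MFP}). So the conjectured identity is not established even at the discrete filling points; what is established is the $u=0$ statement of Theorem \ref{t:theorem-even}. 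Beyond that, your two proposed routes (a regularized analytic torsion for the incomplete manifolds $M_u$, or control of the modulus-one factor $\mathcal{T}_0$ of \cite{Park17} under Dehn filling) are reasonable directions but are exactly the analytic input the paper says it lacks; as written, the proposal correctly locates the gap without closing it, which is consistent with the statement remaining a conjecture.
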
 
Let us remark that we need to take 12-th power of the equality \eqref{e:main thm} to have well-defined complex functions over $V\subset \mathscr{D}(M_0)$
as explained in \cite{Park17}. 


\bibliographystyle{plain}

\end{document}